\title{Galois theory, automorphism groups of prime models, and the Picard-Vessiot closure}
\date{\today}
\author{David Meretzky \\{Universidad de Los Andes}\and Anand Pillay\thanks{Supported by NSF grants  DMS-2054271 and DMS-2502292}\\{University of Notre Dame}}
\newtheorem{Theorem}{Theorem}[section]
\newtheorem{Proposition}[Theorem]{Proposition}
\newtheorem{Definition}[Theorem]{Definition}
\newtheorem{Remark}[Theorem]{Remark}
\newtheorem{Lemma}[Theorem]{Lemma}
\newtheorem{Corollary}[Theorem]{Corollary}
\newcommand{\C}{\mathbb C}
\begin{document}
\maketitle

\begin{abstract}  We work in the context of a complete totally transcendental theory $T = T^{eq}$.  We consider the prime model  $M_{A}$ over a set $A$.  For intermediate sets $B$ with $A\subseteq B \subseteq M_{A}$ which are normal ($Aut(M_{A}/A)$-invariant) and ``minimal" we give a full Galois correspondence between intermediate definably closed sets  $A\subseteq B \subseteq M_{A}$ and 
``closed" subgroups of $Aut(B/A)$ (the group of $A$-elementary permutations of $B$).  

The unique greatest such minimal normal $B$ coincides with Poizat's ``minimal closure" $A_{min}$, so our paper can be see as extending the well-known Galois correspondence between closed subgroups of the profinite group $Aut(acl(A)/A)$ and intermediate definably closed sets, to the case of $A_{min}$ in place of $acl(A)$.

The main result applies  in particular to the ``Picard-Vessiot closure"  $K^{PV_{\infty}}$ (or $K_{\infty}$)  of a differential field $K$ of characteristic $0$ with algebraically closed field $C_{K}$ of constants. We also show that normal differential subfields of $K^{PV_{\infty}}$ containing $K$ are ``iterated $PV$-extensions"  of $K$, and the Galois correspondence result above holds for these extensions.  This fills in  some missing parts of Magid's paper \cite{MagidII}. 

 We also discuss exact  sequences  $1 \to N \to G \to H \to 1$, where  $G = Aut(K_{2}/K)$, $N = Aut(K_{2}/K_{1})$ and $H = Aut(K_{1}/K)$, $K_{1}$ is a (maybe infinite type) $PV$ extension of $K$. $K_{2}$ is a (maybe infinite type) $PV$ extension of $K_{1}$ and $K_{2}$ is normal over $K$ (in the differential closure of $K$) and again $C_{K}$ is algebraically closed.   Both $N$ and $H$ have the structure of proalgebraic groups over $C_{K}$.  We show that conjugation by any given element of $G$ is a proalgebraic automorphism of $N$. Moreover   if  $G$ splits as a semidirect product  $N\rtimes H$, then left multiplication by any fixed  element of $G$ is a morphism of proalgebraic varieties $N\times H \to N\times H$. This improves and extends observations in Section 4 of \cite{MagidII} which dealt with one particular example.

\end{abstract}

%The main result of the paper extends the well-known Galois correspondence between closed subgroups of $Aut(acl^{eq}(A)/A)$ and $dcl$-closed intermediate sets $A \subseteq C \subseteq acl^{eq}(A)$ to the setting of the minimal closure inside of a prime model of a totally transcendental theory $T = T^{eq}$, $A \subseteq mcl^{eq}(A) \subseteq M_A$ (namely, $mcl^{eq}(A)$ is the intersection of all self elementary embeddings of the prime model $M_A$). Precisely, for $B$ a normal intermediate extension $A \subseteq B \subseteq mcl^{eq}(A)$, closed subgroups (in a sense we define) of $Aut(B/A)$ are in Galois correspondence with $dcl$-closed intermediate sets $A \subseteq C \subseteq B$. 

\section{Introduction}
This paper extends and builds on the paper \cite{Pillay-aut} which introduced (relatively) definable subsets of automorphism groups of prime models, proved the existence and uniqueness of invariant measures
on these automorphism groups, and related these notions to differential Galois theory and definable Galois cohomology. In particular the notion was used to characterize definable cocycles (first defined in  \cite{Pillay-Galois}, and was extended in   \cite{Meretzky-definable-GC}).

The current paper is closely related to work of Poizat (see \cite{Poizat-Galois}, \cite{Poizat-theories-stables} and Section 2, Chapter 18 of \cite{Poizat-course}) and can even be thought of as a continuation of it. Sections 3 and 4 of the current paper are also closely related to the unpublished paper \cite{MagidII} of Andy Magid, which in several senses we complete and extend. 

We are concerned here with a full Galois correspondence.   In \cite{Pillay-aut}  we worked mainly in the context of a stable theory $T$, and $M$ a model of $T$ which is atomic and strongly $\omega$-homogeneous over a subset $A$. (Strongly $\omega$-homogeneous over $A$ means that whenever ${\bar a}$, ${\bar b}$ are finite tuples in $M$ with the same type over $A$ then there is an automorphism $f$ of $M$ fixing $A$ pointwise and taking ${\bar a}$ to ${\bar b}$.)  In this paper it is convenient to assume that $T$ is totally transcendental (called $\omega$-stable when $T$ is countable.)  This guarantees the existence and uniqueness of prime models over arbitrary subsets which in particular have the properties above (atomicity and strong $\omega$-homogeneity) and others too.  So from now on $T$ will denote a complete, totally transcendental theory. 

We will be working freely in $T^{eq}$ and let ${\bar M}$ denote a  very saturated model (which exists as $T$ is $t.t$).  $A, B$ will typically denote small definably closed subsets of ${\bar M}$. For $A\subseteq B$, $Aut(B/A)$ denotes the group of $A$-elementary permutations of $B$. 

We let $M_{A}$ denote the prime model over $A$ (which is unique up to $A$ isomorphism). Before stating the results we give a quick account of the main notions, which will be elaborated on later. 
Let $A\subseteq B \subseteq M_{A}$. We call $B$ invariant or normal  (over $A$ in $M$) if $B$ is fixed setwise by $Aut(M_{A}/A)$. We call $B$ minimal (over $A$ in $M$) if there is no proper subset $C$ of $B$ with $A\subseteq C$ and $C$ Tarski-Vaught in $B$.  Let ${\bar b}$ be an enumeration of $B$, $p_{0}({\bar x}) = tp({\bar b}/A)$ and $S_{p_{0}}(B)$ the (Stone) space of extensions of $p_{0}$ to complete types $q({\bar x})$ over $B$.  

For $X$ a closed subset of $S_{p_{0}}(B)$ given by a set $\Sigma({\bar x}, {\bar b})$ of formulas, by $X(Aut(B/A))$ we mean 
$\{\sigma\in Aut(B/A): \models \Sigma(\sigma({\bar b}), {\bar b})\}$, and  $X(Aut({\bar M}/A)) =\{\sigma\in Aut({\bar M}/A): \models \Sigma(\sigma({\bar b}), {\bar b})\}$. 
When $X$ is clopen, then $X$ is determined by $X(Aut(B/A))$ as pointed out in \cite{Pillay-aut}, but not necessarily when $X$ is arbitrary closed. On the other hand $X$ is determined by $X(Aut({\bar M}/A))$. By a {\em closed subgroup} of $Aut(B/A)$ we mean a subgroup of the form $X(Aut(B/A))$ where $X$ is a closed subset of $S_{p_{0}}(B)$ and $X(Aut({\bar M}/A))$ is a subgroup of $Aut({\bar M}/A)$. 

The main result proved in Section 2 is: 
\begin{Theorem} Suppose that $A\subseteq B \subseteq M_{A}$ where $B$ is normal (over $A$ in $M_{A})$ and minimal.  Then there is a Galois correspondence between closed subgroups of $Aut(B/A)$ and sets $C$ with $A\subseteq C \subseteq B$. 
\end{Theorem}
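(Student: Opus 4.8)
\emph{Proof plan.} The plan is to write down the two maps explicitly and show they are mutually inverse order-reversing bijections. Fix an enumeration $\bar b$ of $B$ and set $p_{0}=tp(\bar b/A)$. To a definably closed $C$ with $A\subseteq C\subseteq B$ associate the subgroup $\mathcal G(C):=Aut(B/C)=\{\sigma\in Aut(B/A):\sigma|_{C}=\mathrm{id}\}$, and to a closed subgroup $G\le Aut(B/A)$ associate $\mathcal F(G):=\{b\in B:\sigma(b)=b\text{ for all }\sigma\in G\}$. For a general intermediate $C$ one passes to $\mathrm{dcl}(C)$, which lies in $B$ once one knows $B$ itself is definably closed; checking that (for normal minimal $B$), together with the homogeneity facts used below, is part of the preliminary work of Section~2, and I will use it freely. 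Both $\mathcal F$ and $\mathcal G$ are visibly order-reversing, and $C\subseteq\mathcal F(\mathcal G(C))$ and $G\subseteq\mathcal G(\mathcal F(G))$ are immediate from the definitions, so the content is the two equalities.

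First the well-definedness. Writing each $c\in C$ as $c=f_{c}(\bar b)$ for a $\emptyset$-definable (in $T^{eq}$) partial function $f_{c}$, the set of formulas $\Sigma_{C}(\bar x,\bar b)=\{f_{c}(\bar x)=c:c\in C\}$ cuts out a closed $X_{C}\subseteq S_{p_{0}}(B)$ with $X_{C}(Aut(B/A))=Aut(B/C)$ and $X_{C}(Aut(\bar M/A))=Aut(\bar M/C)$, a subgroup; hence $\mathcal G(C)$ is a closed subgroup. A fixed set of a group of automorphisms is always definably closed and contains $A$ (as $G\le Aut(B/A)$), so $\mathcal F(G)$ is an intermediate definably closed set. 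Next, $\mathcal F\circ\mathcal G=\mathrm{id}$. If $b\in B\setminus C$ then, since $C$ is definably closed, $tp(b/C)$ is non-principal over $C$ in the sense that it has at least two realizations; because $M_{A}$ is prime, hence atomic and strongly $\omega$-homogeneous, over $C$ — a standard property of prime models over intermediate parameter sets in a $t.t.$ theory — at least two of these realizations lie in $M_{A}$ and are $Aut(M_{A}/C)$-conjugate. So there is $\rho\in Aut(M_{A}/C)$ with $\rho(b)\ne b$; by normality $\rho(B)=B$ (as $Aut(M_{A}/C)\subseteq Aut(M_{A}/A)$), so $\rho|_{B}\in Aut(B/C)=\mathcal G(C)$ moves $b$ and $b\notin\mathcal F(\mathcal G(C))$. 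Thus $\mathcal F(\mathcal G(C))=C$; the same argument gives the general fact $\mathrm{Fix}_{B}\big(Aut(B/D)\big)=D$ for every intermediate definably closed $D$.

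The substantive step is $\mathcal G\circ\mathcal F=\mathrm{id}$. Let $G=X(Aut(B/A))$ be a closed subgroup, so that $\tilde G:=X(Aut(\bar M/A))$ is a subgroup of $Aut(\bar M/A)$; let $\Sigma(\bar x,\bar b)$ define $X$. For each formula of $\Sigma$ the condition ``$\models\Sigma(\tau\bar b,\bar b)$'' depends only on the restriction of $\tau$ to a finite subtuple of $\bar b$, so $\tilde G$ is closed in $Aut(\bar M/A)$; containing $\mathrm{id}$, it contains $Aut(\bar M/B)$, so by the Galois theory of the saturated model $\bar M$ we get $\tilde G=Aut(\bar M/D)$ with $D:=\mathrm{Fix}_{\bar M}(\tilde G)$ definably closed, $A\subseteq D\subseteq B$. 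I claim $D=\mathcal F(G)$. Any $\sigma\in G$ extends, using primeness of $M_{A}$ over $B$ (and normality, $\sigma(B)=B$) and then saturation of $\bar M$, to some $\hat\sigma\in Aut(\bar M/A)$ with $tp(\hat\sigma\bar b/B)=tp(\sigma\bar b/B)\in X$; hence $\hat\sigma\in\tilde G=Aut(\bar M/D)$, so $\sigma$ fixes $D$ and $D\subseteq\mathcal F(G)$. Conversely, any $\tau\in Aut(B/D)$ extends (primeness of $M_{A}$ over $D$, normality, saturation) to $\hat\tau\in Aut(\bar M/D)=\tilde G$, so $tp(\tau\bar b/B)=tp(\hat\tau\bar b/B)\in X$ and $\tau\in G$; thus $Aut(B/D)\subseteq G$, and taking fixed sets with the previous paragraph gives $\mathcal F(G)\subseteq\mathrm{Fix}_{B}(Aut(B/D))=D$. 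So $D=\mathcal F(G)$, and therefore $Aut(B/\mathcal F(G))=Aut(B/D)\subseteq G\subseteq Aut(B/\mathcal F(G))$, the last inclusion being trivial; hence $\mathcal G(\mathcal F(G))=G$.

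The main obstacle is Step~3, and within it the identification $D=\mathcal F(G)$: one has to move fluently among the three automorphism groups $Aut(B/A)$, $Aut(M_{A}/A)$ and $Aut(\bar M/A)$ and between the ``interior'' fixed set $\mathcal F(G)$ of $G\le Aut(B/A)$ and the ``exterior'' fixed set $D$ of $\tilde G\le Aut(\bar M/A)$. This rests entirely on the extension and homogeneity properties of the prime model $M_{A}$ over \emph{arbitrary} intermediate definably closed sets — $M_{A}$ being prime (equivalently constructible), hence atomic and strongly $\omega$-homogeneous, over each such set, with enough uniformity to handle the full, possibly large, enumeration $\bar b$ of $B$ — and it is here that total transcendence together with the normality and minimality of $B$ must be used; establishing these facts (and that minimal normal $B$ is definably closed, and that $Aut(B/D)$ is genuinely a closed subgroup for every intermediate $D$) is the real preliminary work that makes the Galois correspondence go through.
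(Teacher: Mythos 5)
Your overall architecture (intermediate definably closed sets correspond to their fixers; the two substantive points being $\mathrm{Fix}_{B}(Aut(B/C))=C$ and ``every closed subgroup is a fixer'') matches the paper's, but the proposal has a genuine gap at exactly the second of these points. In Step~3 you pass from ``$\tilde G=X(Aut(\bar M/A))$ is a pointwise-closed subgroup containing $Aut(\bar M/B)$'' to ``$\tilde G=Aut(\bar M/D)$ for some definably closed $D\subseteq B$'' by invoking ``the Galois theory of the saturated model.'' No such general principle is available: a subgroup of $Aut(\bar M/A)$ cut out by a partial type $\Sigma(\bar x,\bar b)$ is, in general, the stabilizer of the \emph{hyperimaginary} $\bar b/E$, where $E$ is the type-definable equivalence relation that $\Sigma$ induces on realizations of $p_{0}$ (this is the paper's Lemma~2.5), and in an arbitrary theory such a hyperimaginary need not be interdefinable with any set of imaginaries, so $\tilde G$ need not equal $Aut(\bar M/D)$ for any $D\subseteq dcl(B^{eq})$. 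The paper closes this gap by quoting elimination of hyperimaginaries in stable theories (Lemma~2.6): $E$ is the conjunction of $A$-definable equivalence relations $\phi_{i}$ on finite subtuples, each class $(b_{1},\dots,b_{n})/E_{i}$ is an imaginary lying in $B=dcl(B^{eq})$, and $\tilde G$ is the pointwise fixer of the set of these imaginaries (Proposition~2.7). Your proof never mentions equivalence relations, hyperimaginaries, or stability at this point, and this is where the totally transcendental (or at least stable) hypothesis does real work; as written the step is unjustified.

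A second, lesser problem is in Step~2: you assert that $M_{A}$ is prime (hence atomic and strongly $\omega$-homogeneous) over every intermediate $C$ with $A\subseteq C\subseteq B$ as ``a standard property of prime models over intermediate parameter sets in a $t.t.$ theory.'' It is not: in general $M_{A}$ may realize non-isolated types over a subset of itself, and this is precisely the point at which \emph{minimality} of $B$ enters. The paper's Lemma~2.11 proves it by showing $M_{C}\cap B\leq_{TV}B$ (using normality to keep the relevant realizations inside $B$) and then using minimality to conclude $B\subseteq M_{C}$, whence $M_{C}\cong M_{A}$ over $C$. You do acknowledge in your closing paragraph that minimality ``must be used'' in the preliminary work, which correctly localizes the issue, but labelling the fact as standard hides that it is a lemma requiring proof, and without it the fixed-point computation $\mathcal F(\mathcal G(C))=C$ (and indeed the theorem itself) fails for normal but non-minimal $B$.
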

 When $B = acl(A)$ (without the need for any $t.t.$ assumption on $T$), this is Theorem 14 of \cite{Poizat-Galois}, after noting as in \cite{Pillay-aut} that closed subgroups of $Aut(acl(A)/A)$ in the sense of this paper coincide with closed subgroups of $Aut(acl(A)/A)$ as a profinite group. 
Also at the end of Section 2, we make the connection with Poizat's minimal closure from \cite{Poizat-theories-stables}.

\vspace{5mm}
\noindent
We now discuss the example of the theory $DCF_{0}$ of differentially closed fields of characteristic zero and Picard-Vessiot extensions of a differential subfield.  $DCF_{0}$ is $\omega$-stable so the paragraphs above apply.  The prime 
model over a differential subfield $K$ is called the differential closure of $K$ and often denoted $K^{diff}$.  The classical notion of a Picard-Vessiot extension of $K$ (assuming the field $C_{K}$ of constants is algebraically closed)  is a differential field extension of $K$ generated over $K$ by a 
``fundamental  system of solutions" in $K^{diff}$ of a linear differential equation $\partial Y = AY$ over $K$, namely $Y$ is a column  vector of unknowns and $A$ an $n\times n$ matrix over $K$.  It is convenient to follow the 
conventions of Wibmer  in  for example \cite{Wibmer-regular-singular}, where this is referred to as a Picard-Vessiot extension for the equation $\partial Y = AY$ . More generally Wibmer defines a Picard-Vessiot extension of $K$ for a 
(possibly infinite) family ${\mathcal F}$ of linear differential equations over $K$ as the differential field generated over $K$ by the union of all fundamental systems of solutions  (in $K^{diff}$) of all equations in ${\mathcal F}$. He calls a 
Picard-Vessiot extension ``of finite type" if it is the Picard-Vessiot extension for a single linear differential equation (equivalently finitely many).  With these conventions there is a greatest Picard-Vessiot extension of $K$ (inside a given copy 
of $K^{diff}$), which we call $K^{PV}$.  Then $K^{PV}$ will be normal and minimal over $K$ and $Aut(K^{PV}/K)$ has the structure of a proalgebraic group (with respect to $C_{K}$). The closed subgroups in our case correspond to Zariski  closed subgroups and Theorem 1.1 gives the well-known Galois correspondence.  Actually $K^{PV}$ also has the structure of a prodefinable or pro-differential algebraic group defined over $K$, which is isomorphic over $K^{PV}$ to the corresponding proalgebraic over $C_{K}$ group.

In any case Theorem 1.1 covers other cases where it gives new results.
One such is the ``Picard-Vessiot closure" of a differential field which will be looked at in Section 3.  Given again a differential field $K$ with algebraically closed field of constants, we have as 
above $K^{PV}$. In contradistiction with the case of $K^{alg}$ the algebraic closure of $K$, we typically have that $(K^{PV})^{PV}$ properly contains $K^{PV}$.  For example when $K = 
\C$, the complex field considered as a field of constants, then $Aut(K^{PV}/K)$ is the connected component of the proalgebraic group on one generator.  On the other hand $
\C(t)$ (with $d/dt$) is contained in $K^{PV}$ and  $Aut(\C(t)^{PV}/\C(t))$ is the free proalgebraic group on continuum many generators (and of course is contained in 
$Aut((K^{PV})
^{PV}/K^{PV})$).  We define  $K^{PV_{0}} =   K$,  $K^{PV_{n+1}} = (K^{PV_{n} })^{PV}$, and $K^{PV_{\infty}} = \cup_{n}K^{PV_{n}}$. 

Already in \cite{Poizat-Galois} and \cite{Poizat-theories-stables} where the minimal closure is introduced, it is stated that $K^{PV}$ is normal and minimal being contained in the minimal closure $K_{min}$ of $K$ (see also Lemma 2.14 of the current paper).  But more is true. 
    
\begin{Proposition}
    Each of $K^{PV_n}$ and $K^{PV_\infty}$ are normal and minimal over $K$ in $K^{diff}$, (so by Lemma 2.14 contained in $K_{min}$). 
\end{Proposition}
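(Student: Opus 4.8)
The plan is to settle $K^{PV_{n}}$ first, by induction on $n$, and then read off both assertions for $K^{PV_{\infty}}=\bigcup_{n}K^{PV_{n}}$. I would use three standing facts: that since $C_{K}$ is algebraically closed no Picard--Vessiot extension adds constants, so inductively (and passing to increasing unions in the infinite-type case) $C_{K^{PV_{n}}}=C_{K^{PV_{\infty}}}=C_{K}$; that $K^{diff}=M_{K}$ is atomic and strongly $\omega$-homogeneous over $K$; and that for any differential field $L$ with algebraically closed constants $L^{PV}$ is minimal over $L$ --- the Picard--Vessiot case of the statement recalled in the introduction (and in Lemma 2.14), whose proof works verbatim over any such base. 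I would apply the last fact with $L=K^{PV_{n}}$, so that $K^{PV_{n+1}}=(K^{PV_{n}})^{PV}$ is minimal over $K^{PV_{n}}$.

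First I would prove, by induction on $n$, that $K^{PV_{n}}$ is $Aut(K^{diff}/K)$-invariant; the case $n=0$ is trivial. Given the hypothesis for $n$ and $\sigma\in Aut(K^{diff}/K)$, that hypothesis makes $\sigma$ restrict to a differential automorphism of $K^{PV_{n}}$ fixing $K$, hence $C_{K}$, pointwise. Now $\sigma$ maps every Picard--Vessiot sub-extension $L=K^{PV_{n}}\langle Z\rangle$ of $K^{PV_{n}}$ lying in $K^{diff}$ (with $Z$ a fundamental matrix of a linear equation $\partial Y=AY$ over $K^{PV_{n}}$ generating a no-new-constants extension) to $K^{PV_{n}}\langle\sigma(Z)\rangle$, which is again such an extension, now for $\partial Y=\sigma(A)Y$ with $\sigma(A)$ over $\sigma(K^{PV_{n}})=K^{PV_{n}}$ and with $C_{K^{PV_{n}}\langle\sigma(Z)\rangle}=\sigma(C_{K^{PV_{n}}\langle Z\rangle})=C_{K}$; since $K^{PV_{n+1}}$ is the union of all such sub-extensions, $\sigma(K^{PV_{n+1}})\subseteq K^{PV_{n+1}}$, and applying this to $\sigma^{-1}$ gives equality. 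Taking unions then gives $\sigma(K^{PV_{\infty}})=K^{PV_{\infty}}$, so $K^{PV_{\infty}}$ is normal too. I expect this part to be routine.

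Next I would prove, by induction on $n$, that $K^{PV_{n}}$ is minimal over $K$ ($n=0$ trivial). Assuming this for $n$, let $D$ satisfy $K\subseteq D\subseteq K^{PV_{n+1}}$ and be Tarski--Vaught in $K^{PV_{n+1}}$; I want $D=K^{PV_{n+1}}$. The crux is that $D\cap K^{PV_{n}}$ is Tarski--Vaught in $K^{PV_{n}}$: granting it, minimality of $K^{PV_{n}}$ over $K$ gives $D\cap K^{PV_{n}}=K^{PV_{n}}$, so $K^{PV_{n}}\subseteq D$, and then minimality of $K^{PV_{n+1}}$ over $K^{PV_{n}}$ applied to $K^{PV_{n}}\subseteq D\subseteq K^{PV_{n+1}}$ gives $D=K^{PV_{n+1}}$. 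To establish the crux: given an $L$-formula $\phi(x,\bar{e})$ with $\bar{e}$ a tuple from $D\cap K^{PV_{n}}$ and a realization $b$ of $\phi(x,\bar{e})$ in $K^{PV_{n}}$, use atomicity of $K^{diff}$ over $K$ to pick $\theta(x,\bar{e})$ over $K$ isolating $tp(b/K\bar{e})$; then $\theta(x,\bar{e})$ is realized in $K^{PV_{n+1}}$, so by Tarski--Vaughtness of $D$ in $K^{PV_{n+1}}$ it is realized by some $d\in D$, and since $\theta(x,\bar{e})$ isolates $tp(b/K\bar{e})$ we get $tp(d/K\bar{e})=tp(b/K\bar{e})$; in particular $\phi(d,\bar{e})$ holds and $tp(d/K)=tp(b/K)$. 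Finally, because $K^{PV_{n}}$ is normal over $K$ in $K^{diff}$ and $K^{diff}$ is strongly $\omega$-homogeneous over $K$, the equality $tp(d/K)=tp(b/K)$ with $b\in K^{PV_{n}}$ forces $d\in K^{PV_{n}}$; so $d\in D\cap K^{PV_{n}}$ realizes $\phi(x,\bar{e})$, as required.

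Finally, normality of $K^{PV_{\infty}}$ is already done, and for its minimality I would run the crux verbatim with $K^{PV_{\infty}}$ in place of $K^{PV_{n+1}}$ --- the only inputs were that the ambient field contains $K^{PV_{n}}$ and that $K^{diff}$ is atomic and strongly $\omega$-homogeneous over $K$, with $K^{PV_{n}}$ normal and minimal over $K$ --- obtaining $K^{PV_{n}}\subseteq D$ for all $n$, hence $D=K^{PV_{\infty}}$. Containment in $K_{min}$ then follows from Lemma 2.14. The step I expect to be the real obstacle is exactly the crux of the minimality argument: a priori a realization of $\phi(x,\bar{e})$ produced inside $D$ by Tarski--Vaughtness of $D$ in the larger field need not lie in $K^{PV_{n}}$, and the fix is to pass to the isolating formula $\theta$ (forcing the realization's type over $K$ to agree with that of $b$) and then invoke normality of $K^{PV_{n}}$, by which membership in $K^{PV_{n}}$ depends only on the type over $K$. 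A subsidiary point needing care is that minimality of $L^{PV}$ over $L$ is only usable over the bases $L=K^{PV_{n}}$ because their constant fields are all the algebraically closed field $C_{K}$, which is why the no-new-constants observation is placed at the start.
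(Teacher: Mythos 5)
Your proof is correct, and it actually supplies more than the paper does: the paper treats this Proposition (Lemma 3.1) as part of the iterative construction, proving only that a single $PV$ extension is normal and minimal over its base (via the basis-of-solutions argument) and that normality keeps $K^{diff}$ a differential closure of each stage, and then simply asserts that minimality over $K$ persists up the tower. The real content you add is the ``crux'' lemma: if $D\leq_{TV}K^{PV_{n+1}}$ with $K\subseteq D$ then $D\cap K^{PV_{n}}\leq_{TV}K^{PV_{n}}$, proved by passing to an isolating formula over $K$ (atomicity of $K^{diff}$ over $K$) and then using normality of $K^{PV_{n}}$ to force the Tarski--Vaught witness back into $K^{PV_{n}}$; this is exactly the transitivity-of-minimality step the paper leaves implicit, and your handling of the union $K^{PV_{\infty}}$ and of the constants ($C_{K^{PV_{n}}}=C_{K}$, needed to apply the $PV$ minimality argument over the bases $K^{PV_{n}}$) is also right. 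For comparison, the paper's machinery offers a cheaper route to minimality: one can show directly that $K^{PV_{n}}\subseteq K_{min}$ (every $K$-elementary image of $K^{diff}$ is an elementary substructure containing $K$, hence contains the full $C_{K}$-vector space of solutions of each linear equation over $K$, and inductively over each $K^{PV_{m}}$), after which Lemma 2.14(2) gives minimality of any subset of $K_{min}$ for free; your argument is longer but self-contained and does not route through Poizat's results on $A_{min}$.
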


Hence Theorem 1.1 applies to give the Galois correspondence.  Magid calls $K^{PV_{\infty}}$ the complete PV closure of $K$ (although $PV$ closure would be correct and enough). In \cite{MagidII} a full Galois correspondence for 
$Aut(K^{PV_{\infty}}/K)$ is given; between intermediate differential fields and ``closed" subgroups. However no definition of closed subgroup is offered (other than the tautological definition of being precisely the subgroups of the 
form $Fix(L)$ for $L$ an intermediate differential field).  So Theorem 1.1  fills this gap in a reasonably canonical form. Neverthess there is some content in \cite{MagidII}, namely that the set of fixed points of $Fix(L)$ is precisely $L$. The 
same observation, in a more general model-theoretic environment, is made in \cite{Poizat-Galois} ( top of p. 1166). 

Actually Magid's Galois correspondence (Theorem 5 of \cite{MagidII}) is for what he calls normal ``locally iterated" $PV$ extensions.   By our conventions that $PV$ extensions may be of infinite type, we could and will rename these ``iterated 
$PV$ extensions". Such an object is a differential field $L$ with $K\leq L \leq K^{PV_{\infty}}$ such that EITHER  $L$ is the union of a strictly increasing chain $(L_{i}: i < \omega)$, with $L_{0} = K$ and each $L_{i+1}$ a $PV$ extension of 
$L_{i}$, OR there is a finite such chain $(L_{i}:i<n)$ with $L = L_{n}$.   In particular each $K^{PV_{n}}$ as well as $K^{PV_{\infty}}$ are examples of iterated $PV$ extensions.

  We will  prove:
\begin{Proposition}  Let $K\leq L \leq K^{PV_{\infty}}$ be normal over $K$. Then $L$ is an iterated $PV$ extension of $K$ where moreover each $L_{i}$ is also normal and minimal (over $K$ in $K^{diff}$).
\end{Proposition}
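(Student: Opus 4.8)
The plan is to set $L_n := L\cap K^{PV_n}$ for $n\ge 0$. Then $L_0=K$, the $L_n$ form an increasing chain, and $\bigcup_n L_n = L$ since $L\subseteq K^{PV_\infty}=\bigcup_n K^{PV_n}$. Discarding repetitions gives a strictly increasing chain $K=L^{(0)}\subsetneq L^{(1)}\subsetneq\cdots$, either finite or indexed by $\omega$, still exhausting $L$; since the value of the chain $(L_n)_n$ immediately preceding a new strict value $L^{(k+1)}=L_m$ is the previous distinct value $L^{(k)}=L_{m-1}$, each successive quotient $L^{(k+1)}/L^{(k)}$ is one of the quotients $L_{n+1}/L_n$. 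So it will suffice to prove: (a) each $L_n$ is normal and minimal over $K$ in $K^{diff}$, and (b) each $L_{n+1}/L_n$ is a $PV$ extension. For (a): normality is immediate, since $L$ is normal over $K$ by hypothesis and $K^{PV_n}$ is normal over $K$ by Proposition 1.2, and an intersection of $Aut(K^{diff}/K)$-invariant definably closed sets is again such. Minimality over $K$ will follow from Proposition 1.2 (which gives $K^{PV_n}$ minimal over $K$) together with the fact, to be recorded in Section 2, that an $Aut(K^{diff}/K)$-invariant definably closed set lying between $K$ and a minimal normal extension is itself minimal over $K$ (alternatively, minimality of $L_n$ over $K$ also drops out a posteriori from (b), since a tower of $PV$, hence minimal normal, extensions is minimal normal). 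Since minimality over $K$ passes to minimality over any intermediate set and normality over $K$ passes to normality over any larger set, each $L_n$, and each $K^{PV_m}$ with $m\le n$, is also normal and minimal over $L_{n-1}$.

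For (b), fix $n\ge 1$ and abbreviate $E=L_n$, $F=L_{n-1}$, $F'=K^{PV_{n-1}}$, and $E'=L_n\cdot K^{PV_{n-1}}$ (compositum inside $K^{PV_n}$). Then $F\subseteq F'$, $F\subseteq E$, $E\subseteq E'$, $F'\subseteq E'\subseteq K^{PV_n}$, and the crucial identity is $E\cap F' = L\cap K^{PV_n}\cap K^{PV_{n-1}} = L\cap K^{PV_{n-1}} = F$. By construction $K^{PV_n}=(K^{PV_{n-1}})^{PV}$ is a $PV$ extension of $F'$. Now $E'$ is an intermediate differential field of $K^{PV_n}/F'$ which is normal over $F'$ (because $E=L_n$, being $Aut(K^{diff}/K)$-invariant, is $Aut(K^{diff}/F')$-invariant, and $E'=E\cdot F'$), so by the Galois correspondence for $PV$ extensions $E'/F'$ is itself a $PV$ extension, with $Aut(E'/F')$ a pro-(linear algebraic) group over $C_{F'}=C_K$. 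Also $E'$ is normal over $F$ (a compositum of two $Aut(K^{diff}/K)$-invariant, hence $Aut(K^{diff}/F)$-invariant, sets) and minimal over $F$ (it is $Aut(K^{diff}/K)$-invariant and lies between $K$ and the minimal normal extension $K^{PV_n}$, hence minimal over $K$ by the Section~2 fact above, hence over $F$). So Theorem 1.1 applies to $E'/F$.

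Working inside the Galois correspondence for $E'/F$ from Theorem 1.1, consider the closed subgroups $H_E:=Aut(E'/E)$ and $H_{F'}:=Aut(E'/F')$ of $\mathcal{G}:=Aut(E'/F)$; both are normal in $\mathcal{G}$ since $E$ and $F'$ are normal over $F$. The correspondence translates $E\cap F' = F$ into $\langle H_E, H_{F'}\rangle = Aut(E'/(E\cap F')) = \mathcal{G}$ and translates $E\cdot F' = E'$ into $H_E\cap H_{F'} = Aut(E'/E') = 1$, so $\mathcal{G}$ is the internal direct product $H_E\times H_{F'}$. Hence the restriction map $\mathcal{G}\to Aut(E/F)$ --- which is onto because $E/F$ is normal, and has kernel $H_E$ --- identifies $Aut(L_n/L_{n-1})=Aut(E/F)$ with $\mathcal{G}/H_E\cong H_{F'}=Aut(E'/F')$; this identification is an isomorphism of pro-definable groups, being built from restriction maps between the relevant definable Galois groups, so $Aut(L_n/L_{n-1})$ carries the structure of a pro-(linear algebraic) group over $C_K$. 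Together with $L_n/L_{n-1}$ being normal, the characterization of $PV$ extensions (normal, with Galois group pro-(linear algebraic) over the constants; equivalently internal to the constants --- see Section 3) yields that $L_n/L_{n-1}$ is a $PV$ extension. This establishes (b), and with (a) and the thinning-out remark it exhibits $L$ as an iterated $PV$ extension of $K$ with each member of the chain normal and minimal over $K$.

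The main obstacle is (b), and the crux within it is that $L_n\cdot K^{PV_{n-1}}$ need not equal $K^{PV_n}$, so one cannot simply restrict the $PV$ extension $K^{PV_n}/K^{PV_{n-1}}$ to $L_n/L_{n-1}$; the device is to pass to the compositum $E'=L_n\cdot K^{PV_{n-1}}$ and to use that $L_n$ and $K^{PV_{n-1}}$ are each normal over $L_{n-1}$ with intersection exactly $L_{n-1}$, which forces $Aut(E'/L_{n-1})$ to split as a direct product and so transports $PV$-ness through the Galois correspondence. A secondary point requiring care is the minimality bookkeeping for the $L_n$: it relies on the Section~2 fact that invariant definably closed subsets of minimal normal extensions are minimal (needed both for the $L_n$ themselves and to know $E'/F$ is minimal, so that Theorem 1.1 applies), and this fact --- together with transitivity of ``minimal normal'' along a tower --- is what should be verified in Section 2.
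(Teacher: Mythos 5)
Your reduction to the successive quotients $L_{n+1}/L_n$ and the normality/minimality bookkeeping in part (a) are fine and match the paper (Lemma 2.15(2) is exactly the ``Section 2 fact'' you invoke). The problems are in part (b), where you replace the paper's direct construction by a compositum-plus-group-splitting argument, and two steps there are genuine gaps. First, the internal direct product $\mathcal{G}=H_E\,H_{F'}$ does not follow from the Galois correspondence of Theorem 1.1: the correspondence translates $E\cap F'=F$ only into the statement that the \emph{smallest closed subgroup} containing $H_E$ and $H_{F'}$ is $\mathcal{G}$, not that the set-theoretic product $H_E H_{F'}$ (equivalently, the image of the restriction map $Aut(E'/F')\to Aut(E/F)$) is all of $\mathcal{G}$. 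Surjectivity of that restriction amounts to showing $tp(\bar e/F)\vdash tp(\bar e/F')$ for $\bar e$ enumerating $E$, and this needs an argument; it can in fact be proved by showing that the canonical parameter of a formula isolating $tp(\bar e_0/F')$ is fixed by $Aut(K^{diff}/E)$ and hence lies in $E\cap F'=F$ --- but that is precisely the canonical-parameter device that drives the paper's own proof, so you cannot get it ``for free'' from the correspondence.

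Second, and more seriously, the concluding inference --- ``$L_n/L_{n-1}$ is normal and $Aut(L_n/L_{n-1})$ is (abstractly, or even prodefinably) isomorphic to a pro-linear-algebraic group over $C_K$, hence $L_n/L_{n-1}$ is a $PV$ extension'' --- appeals to a characterization that appears nowhere in Section 3 and is not automatic. To exhibit $L_n$ as a $PV$ extension of $L_{n-1}$ you must actually produce linear differential equations \emph{over $L_{n-1}$} whose solution sets generate $L_n$; an isomorphism of Galois groups transported from $Aut(E'/F')$ does not hand you these (one would at least need the action to be definable over $L_{n-1}$ and then a Kolchin-type theorem identifying such extensions as $PV$, neither of which is set up here). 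The paper instead argues directly: for $b\in L_{n+1}\setminus L_n$ it isolates $tp(\bar b/K^{PV_n})$ by a formula whose canonical parameter is forced into $L_n$ by normality, and then uses Zilber indecomposability to show that the $C_K$-span $W$ of the realization set is an $L_n$-definable finite-dimensional subspace of the solution space, contained in $L_{n+1}$ and containing $b$, so that $W$ generates a finite-type $PV$ extension of $L_n$. That construction of the defining equations over $L_n$ is the real content of the proposition, and it is the piece your argument is missing.
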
 

Hence Theorem 1.1 applies to any normal  $L\leq K^{PV_{\infty}}$ containing $K$. 

\vspace{2mm}
\noindent
Now for $n>1$, $Aut(K^{PV_{n}}/K)$ can not have any structure as a  prodefinable group in $DCF_{0}$ or equivalently as a proalgebraic group with respect to the constants.  
So one can ask what additional information (other than having a composition series of proalgebraic groups in the constants) does one have. 

We will do an analysis of the case where $n=2$.  In fact we work at the more general level of $2$-iterated $PV$-extensions.  Let $K_{1}$ be a PV extension of $K$, and let $K_{2}$ be a PV extension of $K_{1}$ such that $K_{2}$ is normal in $K^{diff}$ over $K$.  Both $K_{1}$ and $K_{2}$ may be of infinite type. A special case would be $K_{1} = K^{PV_{1}}$ and $K_{2} = K^{PV_{2}}$.

We let $G = Aut(K_{2}/K)$, $N= Aut(K_{2}/K_{1})$, and $H = Aut(K_{1}/K)$.  Then $N$ and $H$ have the structure of  proalgebraic groups with respect to the algebraically closed field $C_{K}$, and we have the exact sequence 
$$1\to N\xrightarrow{i} G \xrightarrow{\pi} H \to 1$$ where $i$ is inclusion.

%It will be shown in a subsequent paper that the graph of group multiplcation on ${\mathcal G}$ is not relative definable.... 

\begin{Proposition}\label{prop: proalg}
(i) Let $\tau\in G$. Then conjugation by $\tau$ is a proalgebraic automorphism of $N$.
\newline
(ii) Suppose that $G$ splits as the semidirect product of $N$ and $H$ then left translation by a given element of $G$ is a proalgebraic morphism from 
the underlying proalgebraic variety of $G$, $N \times H$, to itself. 
\end{Proposition}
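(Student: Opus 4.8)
The plan is to prove (i) by an explicit computation with fundamental solution matrices, and then to read off (ii) from (i) once the splitting hypothesis is unwound. First I would fix, in a $G$-equivariant way, the proalgebraic structure on $N = Aut(K_2/K_1)$. Using that $K_2$ is a $PV$ extension of $K_1$ which is moreover normal over $K$, take $\mathcal{F} = \{\partial Y = A_i Y : i \in I\}$ to be the set of \emph{all} linear differential equations over $K_1$ that admit a fundamental solution matrix in $K_2$, and choose such a matrix $Z_i \in GL_{n_i}(K_2)$ for each $i$; then $K_2 = K_1\langle Z_i : i \in I\rangle$. Since $C_{K_2} = C_{K_1} = C_K$, every $\sigma \in N$ satisfies $\sigma(Z_i) = Z_i\,c_i(\sigma)$ with $c_i(\sigma) \in GL_{n_i}(C_K)$, and $\sigma \mapsto (c_i(\sigma))_{i \in I}$ identifies $N$ with a closed subgroup of the proalgebraic group $\prod_{i \in I} GL_{n_i}(C_K)$; this is the standard description of the $PV$ group and, up to proalgebraic isomorphism, is independent of the chosen generating family, so it realizes the given proalgebraic structure on $N$. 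Note $N \trianglelefteq G$ (as $K_1$ is normal over $K$), and that each $\tau \in G$, restricting to $K_1$ and to $K_2$, permutes $\mathcal{F}$: there is a bijection $i \mapsto \tau\cdot i$ of $I$ with the equation $\partial Y = \tau(A_i)Y$ being the one indexed by $\tau \cdot i$, and $n_{\tau\cdot i} = n_i$.

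For (i), fix $\tau \in G$. Since $\tau^{-1}(Z_i)$ and $Z_{\tau^{-1}\cdot i}$ are both fundamental matrices of the equation indexed by $\tau^{-1}\cdot i$, they differ by a constant matrix: $\tau^{-1}(Z_i) = Z_{\tau^{-1}\cdot i}\, h_i$ with $h_i \in GL_{n_i}(C_K)$. Feeding this into $(\tau\sigma\tau^{-1})(Z_i) = \tau(\sigma(\tau^{-1}(Z_i)))$ and using repeatedly that the constant matrices involved lie in $GL(C_K)$ and are fixed by $\tau$ and by every $\sigma\in N$, one obtains
\[
 (\tau\sigma\tau^{-1})(Z_i) = Z_i\, h_i^{-1}\, c_{\tau^{-1}\cdot i}(\sigma)\, h_i , \qquad\text{so}\qquad c_i(\tau\sigma\tau^{-1}) = h_i^{-1}\, c_{\tau^{-1}\cdot i}(\sigma)\, h_i .
\]
Hence conjugation by $\tau$ is the restriction to $N$ of the self-map $\Phi_\tau$ of $\prod_{i\in I} GL_{n_i}(C_K)$ given by $(x_i)_i \mapsto (h_i^{-1}\, x_{\tau^{-1}\cdot i}\, h_i)_i$, which is plainly a morphism of proalgebraic groups (a permutation of the coordinates followed by conjugations by fixed $C_K$-points) with inverse $\Phi_{\tau^{-1}}$. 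Since $\tau N\tau^{-1} = N$, the restriction $\Phi_\tau|_N$ is a proalgebraic automorphism of $N$, proving (i).

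For (ii), the hypothesis that $G$ splits as the semidirect product of $N$ and $H$ provides a homomorphic section $s\colon H \to G$ of $\pi$, and the underlying proalgebraic variety of $G$ is $N\times H$ via $(n,h)\mapsto n\,s(h)$ (we do not need the group operation of $G$ itself to be proalgebraic for this to make sense). Fix $\rho\in G$ and write $\rho = n_0\,s(h_0)$ with $n_0\in N$, $h_0\in H$. For $(n,h)\in N\times H$,
\[
 \rho\cdot n\,s(h) \;=\; n_0\, s(h_0)\, n\, s(h) \;=\; \big(n_0\cdot s(h_0)\,n\,s(h_0)^{-1}\big)\, s(h_0)\,s(h) \;=\; \big(n_0\cdot {}^{s(h_0)}n\big)\, s(h_0h),
\]
the last step because $s$ is a homomorphism. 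Thus left translation by $\rho$ is the map $(n,h)\mapsto\big(n_0\cdot{}^{s(h_0)}n,\; h_0h\big)$ of $N\times H$; its first coordinate is conjugation by $s(h_0)\in G$ (a proalgebraic automorphism of $N$ by (i)) followed by left translation by $n_0$ in the proalgebraic group $N$, and its second coordinate is left translation by $h_0$ in the proalgebraic group $H$. Being a product of proalgebraic morphisms, it is a proalgebraic morphism $N\times H\to N\times H$, proving (ii).

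The one step requiring genuine care — and the main obstacle to a clean write-up — is the first move in (i): taking the defining family $\mathcal{F}$ to be \emph{all} linear equations realized in $K_2$ rather than merely a generating one, so that conjugation acts just by a permutation of coordinates together with gauge conjugations. One must check that this does not enlarge $K_2$ (which is exactly where normality of $K_2$ over $K$ enters: $K_2$ already contains a fundamental matrix of every $\tau$-conjugate of every equation it realizes), and that the resulting embedding of $N$ into $\prod_i GL_{n_i}(C_K)$ carries the standard proalgebraic structure, so that the assertion ``$\Phi_\tau|_N$ is proalgebraic on $N$'' is both meaningful and correct. Once this bookkeeping is settled, (i) is the displayed identity and (ii) is formal.
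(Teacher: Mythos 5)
Your proof is correct, but it takes a genuinely different route from the paper's. The paper works through the machinery of Kamensky--Pillay: it fixes a coset $D=\pi^{-1}(\sigma_0)$ of $N$, shows that the restriction $D_L$ to a finite-type $PV$ subextension $L$ is a $K_1$-definable bitorsor for the \emph{intrinsic} differential algebraic Galois groups $H_{A}^{+}$ and $H_{B}^{+}$ (Proposition 4.2), deduces that conjugation by $\tau$ is a definable isomorphism $H_A^+\to H_B^+$, passes to the inverse limit, and only at the last step invokes stable embeddedness of $C_K$ in $K^{diff}$ to upgrade ``prodefinable'' to ``proalgebraic''. You instead work entirely in the \emph{extrinsic} coordinates: by indexing $N$ by the full $G$-stable family of equations over $K_1$ realized in $K_2$, you exhibit conjugation by $\tau$ explicitly as a permutation of coordinates composed with conjugation by fixed matrices in $GL_{n_i}(C_K)$, which lands directly in the constants and makes proalgebraicity manifest without any appeal to stable embeddedness. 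Your computation $c_i(\tau\sigma\tau^{-1})=h_i^{-1}c_{\tau^{-1}\cdot i}(\sigma)h_i$ checks out (the constants are fixed by both $\sigma$ and $\tau$ since $C_{K_2}=C_K\subseteq K$), and the point you flag as delicate is indeed fine: $\tau(A_i)$ is again over $K_1$ because $K_1$ is normal over $K$, and its fundamental matrix $\tau(Z_i)$ lies in $K_2$ because $\tau(K_2)=K_2$, so the family really is permuted and nothing is enlarged. Note that, like the paper's Remark 4.6(i), your argument for (i) uses only normality of $K_1$ over $K$, not that it is a $PV$ extension. Your part (ii) is the same formal semidirect-product computation as the paper's Corollary 4.7. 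What the paper's longer route buys is the intermediate result that the coset $D_L$ is a $K_1$-definable bitorsor (of independent interest, and tied to the definable Galois cohomology theme of the references); what your route buys is a shorter, self-contained, and more explicit proof of the stated proposition. The one assertion you leave to standard $PV$ theory --- that the closed embedding of $N$ into $\prod_i GL_{n_i}(C_K)$ induces the same proalgebraic structure as the inverse limit $\varprojlim G_L$ over finite-type subextensions --- is true (project to finite subfamilies), but in a full write-up you should say a word about it, since it is exactly the bridge between your coordinates and the structure the proposition refers to.
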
 

The proof of Proposition \ref{prop: proalg} will go through the  the general structure theory developed in \cite{Kamensky-Pillay}.

\vspace{5mm}
\noindent
In Section 4 of \cite{MagidII} Magid studies a particular example. The base field $K$ is ${\mathbb C}(t)$.  $F_{1}$ is a certain finite type Picard-Vessiot extension of $K$ with Galois group  $H$ equal to the additive group of the constants. $E$ is a certain  (infinite type)  Picard-Vessiot extension of $F_{1}$ which is normal (in $K^{diff}$ over $K$) and whose Galois group $N$  is the direct product of continuum many copies of the additive group of the constants, indexed by the constants, a proalgebraic group.  Magid observes that if $G = Aut(E/K)$ then $G$ is the semidirect product of $N$ and $H$.  In this special case Magid computes   that left multiplication  by any given element of $G$ is a proalgebraic morphism of the proalgebraic variety $N\times H$ to itself  (but not right multiplication).  Of course this is just a special case of our Proposition \ref{prop: proalg} (ii).

In passing,  we know of no (nontrivial) example where $Aut(K^{PV_{2}}/K)$ is a semidirect product of $Aut(K^{PV_{2}}/K^{PV_{1}})$ and $Aut(K^{PV_{1}}/K)$.

\vspace{2mm}
\noindent
{\em Acknowledgements.} The second author would like to thank Andy Magid and Bruno Poizat for discussions and questions, especially concerning the automorphism group of $K^{PV_{2}}$ over $K$. 

\section{The Galois theory}
We will here prove Theorem 1, after expanding on the notions introduced in the introduction above. 

We take $T$ to be a complete $t.t.$ theory in language $L$.  When $L$ is countable this means precisely that $T$ is $\omega$-stable.   As in Section 1, we work in a saturated monster model ${\bar M}^{eq}$, $A$ and $B$ denote small definably closed subsets, and $M_{A}$ denotes (a copy of) the prime model over $A$. 
We fix $B$ with $A\subseteq B \subseteq M_{A}$. We defined $B$ to be {\em normal} (in $M_{A}$ over $A$) if $B$ is setwise invariant under $Aut(M_{A}/A)$. This just means that for any finite tuple $b$ from $B$, all realizations of $tp(b/A)$ in $M_{A}$ are already in $B$.  By $Aut(B/A)$ we denote the group of elementary (in the sense of $M_{A}$ or ${\bar M}$) over $A$ permutations of $B$. By our assumptions this is precisely the set of restrictions to $B$ of elements of $Aut(M_{A}/A)$.
In \cite{Pillay-aut} we introduced ``definable" subsets of $Aut(B/A)$, which we  will now call {\em relatively definable}  subsets (reverting to the original language from \cite{KPR}):  For a formula $\phi({\bar x}, {\bar y})$ over $A$, with ${\bar x}$, ${\bar y}$ finite tuples of variables of the same length, and a tuple ${\bar b}$ from $B$ of the same length, 
$\{\sigma\in Aut(B/A): \models \phi(\sigma({\bar b}), {\bar b})\}$ is a relatively definable subset of $Aut(B/A)$. 

We now let ${\bar b}$ be an enumeration  of $B$, and $p_{0}({\bar x}) = tp({\bar b}/A)$. So now ${\bar x}$ is a possibly infinite tuple of variables. We let $S_{p_{0}}(B)$ be the (Stone) space of complete types $p({\bar x})$ over $B$ which extend $p_{0}$.  In \cite{Pillay-aut} we pointed out that relatively definable subsets of $Aut(B/A)$ are in natural one-one-correspondence with clopen subsets of $S_{p_{0}}(B)$.  (We will  repeat this below).  It is natural to ask: What corresponds to arbitrary closed subsets of $S_{p_{0}}(B)$? 

\begin{Definition} Let $X$ be a closed subset of $S_{p_{0}}(B)$ given by the set $\Sigma({\bar x}, {\bar b})$ of formulas, where $\Sigma({\bar x}, {\bar y})$ is a collection of $L_{A}$-formulas and ${\bar b}$ an enumeration of $B$. 
\newline
(i) $X(Aut({\bar M}/A)) =\{ \sigma\in  Aut({\bar M}/A):   {\bar M}\models \Sigma(\sigma({\bar b}), {\bar b}) \}$, and
\newline
(ii) $X(Aut(B/A)) =  X(Aut({\bar M}/A))|_{B}$  $(= \{\sigma\in Aut(B/A): \models \Sigma(\sigma({\bar b}), {\bar b})\})$. 
\end{Definition}

\begin{Lemma} (i) Let $X$ be a closed subset of $S_{p_{0}}(B)$. Then $X$ is determined by $X(Aut({\bar M}/A))$.
\newline
(ii) Let $X$ be a clopen subset of $S_{p_{0}}(B)$. Then $X$ is determined by  $X(Aut(B/A))$.
\end{Lemma}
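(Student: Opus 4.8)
The plan is to treat both parts through the two ``orbit maps'' into the Stone space. Define $\tilde r\colon Aut(\bar M/A)\to S_{p_0}(B)$ by $\sigma\mapsto tp(\sigma(\bar b)/B)$, and $r\colon Aut(B/A)\to S_{p_0}(B)$ by $\tau\mapsto tp(\tau(\bar b)/B)$; both are well defined, since $\sigma$ (resp.\ $\tau$) fixes $A$ pointwise, so $tp(\sigma(\bar b)/A)=tp(\bar b/A)=p_{0}$, and, $B$ being normal, $\tau(\bar b)$ is again an enumeration of $B$ with type $p_{0}$ over $A$. The key point is that if $X$ is the closed set cut out by $\Sigma(\bar x,\bar b)$, then $\sigma\in X(Aut(\bar M/A))$ says exactly that $\sigma(\bar b)$ realizes $\Sigma(\bar x,\bar b)$, i.e.\ $\Sigma(\bar x,\bar b)\subseteq tp(\sigma(\bar b)/B)=\tilde r(\sigma)$, i.e.\ $\tilde r(\sigma)\in X$; thus $X(Aut(\bar M/A))=\tilde r^{-1}(X)$, and likewise $X(Aut(B/A))=r^{-1}(X)$. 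Consequently (i) reduces to surjectivity of $\tilde r$, since then $X=\tilde r(\tilde r^{-1}(X))=\tilde r\big(X(Aut(\bar M/A))\big)$; and (ii) reduces to density of $\mathrm{Im}(r)$ in $S_{p_0}(B)$, since if $X(Aut(B/A))=Y(Aut(B/A))$ then the clopen (in particular open) set $X\triangle Y$ is disjoint from $\mathrm{Im}(r)$, hence empty when $\mathrm{Im}(r)$ is dense.

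For (i): given $q\in S_{p_0}(B)$, I would realize $q$ by a tuple $\bar c$ in $\bar M$; since $q$ extends $p_{0}$ we get $tp(\bar c/A)=p_{0}=tp(\bar b/A)$, so by saturation and strong homogeneity of $\bar M$ there is $\sigma\in Aut(\bar M/A)$ with $\sigma(\bar b)=\bar c$, giving $\tilde r(\sigma)=q$. This uses nothing beyond the existence of the monster model.

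For (ii): as $S_{p_0}(B)$ is a Stone space and the $L_{B}$-formulas are closed under Boolean operations, every clopen subset is $[\phi(\bar x',\bar b')]$ for a single $L_{A}$-formula $\phi(\bar x',\bar y')$ and a finite subtuple $\bar b'$ of $\bar b$; so I would show that any nonempty such set meets $\mathrm{Im}(r)$, i.e.\ find $\tau\in Aut(B/A)$ with $\models\phi(\tau(\bar b'),\bar b')$. Nonemptiness says $\phi(\bar x',\bar b')$ is consistent with the restriction of $p_{0}$ to $\bar x'$, namely with $tp(\bar b'/A)$. Now I use that $M_{A}$ is atomic over $A$: $tp(\bar b'/A)$ is isolated by some $L_{A}$-formula $\theta(\bar x')$, so $\phi(\bar x',\bar b')\wedge\theta(\bar x')$ is a single consistent formula over $A\bar b'\subseteq M_{A}$, which by $M_{A}\preceq\bar M$ is realized by some $\bar d'\in M_{A}$. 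Then $\models\theta(\bar d')$ forces $tp(\bar d'/A)=tp(\bar b'/A)$, so by strong $\omega$-homogeneity of $M_{A}$ over $A$ there is $\sigma\in Aut(M_{A}/A)$ with $\sigma(\bar b')=\bar d'$; normality of $B$ gives $\sigma(B)=B$, so $\tau:=\sigma|_{B}\in Aut(B/A)$ and $\models\phi(\tau(\bar b'),\bar b')$, i.e.\ $r(\tau)\in[\phi(\bar x',\bar b')]$. This gives density, hence (ii).

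The main obstacle is exactly the point where the two parts diverge: $\tilde r$ is onto because $\bar M$ realizes every type over $B$, whereas $r$ is in general \emph{not} onto --- a type in $S_{p_0}(B)$ need not be realized anywhere in $M_{A}$, let alone in $B$ --- so one cannot simply invert $r$ and argue as in (i). Density of $\mathrm{Im}(r)$ is the correct substitute when the data is clopen (finitary), and the crux is converting the a priori infinitary condition ``$\phi$ is consistent with $p_{0}$ restricted to $\bar x'$'' into a \emph{single} consistent formula, using atomicity of the prime model, which can then be pulled into $M_{A}$ by elementarity and realized as $\tau(\bar b')$ by homogeneity --- with normality of $B$ ensuring $\tau$ is an honest permutation of $B$. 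I expect this is the only use of normality: for non-normal $B$ part (ii) can fail, e.g.\ for $B=\mathrm{dcl}(a)$ with $a$ having an $A$-conjugate lying outside $B$, where a nontrivial clopen subset of $S_{p_0}(B)$ has empty trace on $Aut(B/A)$.
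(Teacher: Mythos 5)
Your proof is correct and follows essentially the same route as the paper: part (i) is the observation that every realization of a type in $S_{p_0}(B)$ is the image of $\bar b$ under some $\sigma\in Aut(\bar M/A)$ (saturation/homogeneity of the monster), and part (ii) reduces to a single formula which is realized inside $M_A$ by elementarity, then converted to an element of $Aut(B/A)$ via atomicity, strong $\omega$-homogeneity of $M_A$, and normality of $B$. The orbit-map/density packaging is only a cosmetic difference from the paper's direct two-formula argument.
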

\begin{proof} (i)  Suppose $X,Y$ are closed subsets of $S_{p_{0}}(B)$ given by $\Sigma({\bar x}, {\bar b})$ and $\Gamma({\bar x}, {\bar b})$ respectively.
Suppose $X(Aut({\bar M }/A)) = Y(Aut({\bar M}/A))$.  Let ${\bar c}$ realize $\Sigma({\bar x},{\bar b})$. As ${\bar c}$ realizes $p_{0}$, there is $\sigma\in Aut({\bar M}/A)$ with 
$\sigma({\bar b}) = {\bar c}$. So $\sigma\in X(Aut({\bar M}/A))$, whereby $\sigma\in Y(Aut({\bar M}/A))$, whereby ${\bar c}$ realizes $\Gamma({\bar x}, {\bar b})$. This  suffices.
\newline
(ii) This is essentially Proposition 2.8 (ii) of \cite{Pillay-aut}. But as our notation here is slightly different, we give the proof.   We let $x,y$ denote finite tuples of variables of an appropriate length. Let $b_{0}$ be a finite tuple from $B$ and let $\phi(x,y)$, $\psi(x,y)$ be formulas over $A$, such that for $\sigma\in Aut(B/A)$, $\models \phi(\sigma(b_{0}), b_{0})$ iff $\models \psi(\sigma(b_{0}), b_{0})$. We want to prove that $\phi(x,b_{0})$ is equivalent to $\psi(x,b_{0})$ in $S_{p_{0}}(B)$.  Let $\chi(y)$ isolate $tp(b_{0}/A)$.  We may assume that each of $\phi(x,b_{0})$, $\psi(x,b_{0})$ implies $\chi(x)$. 
We will show that $\phi(x,b_{0})$ and $\psi(x,b_{0})$ are equivalent. For this it suffices to work in the model $M_{A}$. If the formulas are not equivalent then without loss there is $c_{0}\in M_{A}$ such that $\models \phi(c_{0},b_{0})\wedge \neg\psi(c_{0},b_{0})$. As $tp(c_{0}/A) = tp(b_{0}/A)$, there is $\sigma\in Aut(M_{A}/A)$ such that $\sigma(b_{0}) = c_{0}$. 
Now $c_{0}\in B$ by normality and $\tau = \sigma|B \in Aut(B/A)$. So $\models \phi(\tau(b_{0}), b_{0})$ but $\models \neg\psi(\tau(b_{0}), b_{0})$, a contradiction. 
\end{proof}

\begin{Remark} Clearly in general an arbitrary closed subset $X$ of $S_{p_{0}}(B)$ need not be determined by $X(Aut(B/A))$. For example suppose $b_{i}\in B$ is not in $acl(A)$ and let $\Sigma({\bar x}, {\bar b})$ express that $x_{i}\neq b_{j}$ for all $j$. Then  $\Sigma({\bar x}, {\bar b})$ determines a nonempty closed subset of $S_{p_{0}}(B)$,  but there is no $\sigma\in Aut(B/A)$ such that $\models \Sigma(\sigma({\bar b}), {\bar b}))$. 
\end{Remark}  

We get a Stone topology on $Aut({\bar M}/A)$ induced from $S_{p_{0}}(B)$ and we call the corresponding subsets of $Aut({\bar M}/A)$, $B$-closed. 
It then makes sense to talk about a $B$-closed subgroup of $Aut({\bar M}/A)$. 

%The following holds with no conditions on $T$ or $B$ only that $A\subseteq B$....
%\begin{Remark} Let $X$ be a closed subset of $S_{p_{0}}(B)$, given by a set $\Sigma({\bar x},{\bar b})$ of formulas, closed under finite conjunctions.
%Then   $X(Aut({\bar M}/A))$ is a subgroup of $Aut({\bar M}/A)$  (so a $B$-closed subgroup) if and only if .....
%\end{Remark}

\begin{Definition} By a closed subgroup $G$ of $Aut(B/A)$ we mean $H|_B$ for some $B$-closed subgroup $H$ of  $Aut({\bar M}/A)$.  We will say that $\Sigma({\bar x}, {\bar b})$ defines $G$ if it defines $H$ in $Aut({\bar M}/A)$. 
\end{Definition} 

Note that a closed subgroup of $Aut(B/A)$ is in particular a subgroup of $Aut(B/A)$ which is of the form $X(Aut(B/A))$ for some closed subset of $S_{p_{0}}(B)$, but this is NOT an equivalent description. 

The next Lemma holds with no conditions on $T$, only that  $A\subseteq B$ are small subsets of ${\bar M}$ (and Definition 2.1 (i) makes sense at this general level). 

\begin{Lemma}  $\Sigma({\bar x}, {\bar b})$ defines a $B$-closed subgroup $H$ of $Aut({\bar M}/A)$ if and only if   $\Sigma({\bar x}, {\bar y})$ defines an equivalence relation on realizations of $p_{0}$ (in ${\bar M}$).

\end{Lemma}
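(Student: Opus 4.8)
The plan is to turn the statement into a purely group-theoretic identity. Write $G = Aut({\bar M}/A)$, let $P = p_{0}({\bar M})$ be the set of realizations of $p_{0}$ in ${\bar M}$, and let $S = Aut({\bar M}/B) = \mathrm{Stab}_{G}({\bar b})$. Since ${\bar M}$ is saturated and strongly homogeneous, $G$ acts transitively on $P$, so $\sigma S \mapsto \sigma({\bar b})$ is a $G$-equivariant bijection $G/S \to P$ (I fix the convention $(\sigma\tau)(x) = \sigma(\tau(x))$ and use left cosets; the opposite conventions only swap $h^{-1}g$ for $gh^{-1}$ below). Now set $R = \{({\bar c},{\bar d}) \in P\times P : {\bar M}\models \Sigma({\bar c},{\bar d})\}$ and $H = X(Aut({\bar M}/A)) = \{\sigma\in G : {\bar M}\models \Sigma(\sigma({\bar b}),{\bar b})\}$, i.e.\ the set defined by $\Sigma({\bar x},{\bar b})$. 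Because $\Sigma$ is a set of $L_{A}$-formulas, $R$ is invariant under the diagonal action of $G$; transporting $R$ to $G/S$ and applying $h^{-1}$ to the pair $(gS, hS)$ gives $(gS, hS)\in R \iff (h^{-1}gS, eS) \in R \iff h^{-1}g \in H$. In particular $(\sigma({\bar b}),{\bar b})\in R$ depends only on $\sigma S$, so $H$ is automatically a union of left cosets of $S$, and $R$ is completely determined by $H$.

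Next I would match the equivalence-relation axioms for $R$ with the subgroup conditions on $H$, each correspondence being immediate from the displayed formula: reflexivity of $R$, i.e.\ $(gS,gS)\in R$ for all $g$, says exactly $e\in H$; symmetry, i.e.\ $h^{-1}g\in H \Leftrightarrow g^{-1}h\in H$ for all $g,h$, says exactly $H = H^{-1}$ (since $h^{-1}g$ ranges over all of $G$ and $g^{-1}h = (h^{-1}g)^{-1}$); and transitivity, i.e.\ $(h^{-1}g\in H$ and $k^{-1}h\in H)$ imply $k^{-1}g\in H$ for all $g,h,k$, says exactly $H\cdot H\subseteq H$ (specialize $h = e$ and note $k^{-1}g = (k^{-1}h)(h^{-1}g)$). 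Hence $\Sigma({\bar x},{\bar y})$ defines an equivalence relation on $P$ if and only if $H$ is a subgroup of $G$.

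Finally I would observe that $H = X(Aut({\bar M}/A))$ is in every case $B$-closed, being by construction the preimage of the closed set $X\subseteq S_{p_{0}}(B)$ under the map $\sigma\mapsto tp(\sigma({\bar b})/B)$ that defines the $B$-Stone topology on $Aut({\bar M}/A)$; so ``$H$ is a subgroup'' and ``$H$ is a $B$-closed subgroup'' are the same assertion, and combining with the previous paragraph proves the lemma (and uses nothing about $T$ beyond that $A\subseteq B$ are small). The one genuinely delicate point is the very first step: verifying that the orbit picture $P\cong G/S$ is legitimate when ${\bar b}$ is an infinite small tuple, and that $R$ is diagonally $G$-invariant — but both reduce to saturation/homogeneity of ${\bar M}$ together with $Aut({\bar M}/A)$-invariance of $L_{A}$-formulas, and everything after that is bookkeeping, the only care needed being a consistent choice of composition order and of left versus right cosets.
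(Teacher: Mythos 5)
Your proof is correct and is essentially the paper's argument repackaged: the paper verifies reflexivity, symmetry, and transitivity one at a time by choosing automorphisms of ${\bar M}$ over $A$ carrying ${\bar b}$ to given realizations of $p_{0}$ and using that $\Sigma({\bar x},{\bar y})$ is over $A$, which is exactly the content of your identification $P\cong G/S$ and the translation $(gS,hS)\in R \iff h^{-1}g\in H$. Your closing observation that any set of the form $X(Aut({\bar M}/A))$ is automatically $B$-closed, so that only the subgroup condition is at issue, is likewise implicit in the paper's definitions, and the whole argument indeed needs nothing beyond $A\subseteq B$ small, as the paper notes.
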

\begin{proof} This is the same as the proof of  Lemma 3.1 of \cite{Pillay-aut} which is about the case when $X$ is clopen (so given by a single formula)  and $B = M_{A}$.  We will give it anyway. First prove left implies right.  So we assume $\Sigma({\bar x}, {\bar b})$ defines a $B$-closed subgroup $H$ of $Aut({\bar M}/A)$
\newline
(i) Reflexive: As the identity is in $H$, $\Sigma({\bar b}, {\bar b})$ holds whereby $\Sigma({\bar x}, {\bar x})\subseteq p_{0}({\bar x})$.
\newline
(ii) Symmetric:  Suppose  that ${\bar c}$ realizes $p_{0}$ in $\bar M$ and $\models \Sigma({\bar c}, {\bar b})$.  Then there is $\sigma\in H$ such that $\sigma({\bar b}) = {\bar c}$.  But then $\sigma^{-1}\in H$, whereby $\models \Sigma(\sigma^{-1}({\bar b}), {\bar b})$. As $\Sigma({\bar x}, {\bar y})$ is over $A$ we can apply $\sigma$ to get 
$\models \Sigma({\bar b}, \sigma({\bar b})$, namely $\models \Sigma({\bar b}, {\bar c})$. 
\newline
We have shown that $p_{0}({\bar x})$ and $\Sigma({\bar x}, {\bar b})$ implies $\Sigma({\bar b}, {\bar x})$.  Hence $\Sigma$ is symmetric on all realizations of $p_{0}$.
\newline
(iii) Transitive:  Given that we have symmetry, it suffices to prove that if ${\bar c}$, ${\bar d}$ realize $p_{0}$ and $\models\Sigma({\bar c}, {\bar b})$ and $
\models\Sigma({\bar d}, {\bar b})$, then $\models\Sigma({\bar d}, {\bar c})$.  So we have $\sigma, \tau\in H$ such that $\sigma({\bar b}) = {\bar c})$ and $\tau({\bar b}) 
= {\bar d}$. Then $\sigma^{-1}\tau\in H$, whereby $\models \Sigma(\sigma^{-1}\tau({\bar b}), {\bar b})$, so $\models\Sigma(\sigma^{-1}({\bar d}), {\bar b})$. Apply 
$\sigma$ to get $\Sigma({\bar d}, {\bar c})$, as required. 
\newline
The right implies left direction just reverses the proofs above. For example if $\Sigma({\bar x}, {\bar x})$ holds for ${\bar x}$ realizing $p_{0}$ then $\Sigma({\bar b}, {\bar b})$ holds, so the identity is in $H$. 
\end{proof}

We now bring in  (some of) the assumptions.

\begin{Lemma}  $\Sigma({\bar x}, {\bar y})$ be as in Lemma 2.5. Then there are $A$-definable equivalence relations  $\phi_{i}({\bar x}, {\bar y})$ such that on realizations of $p_{0}$,  $\Sigma({\bar x}, {\bar y})$ agrees with $\{\phi_{i}({\bar x}, {\bar y}): i\in I\}$. 
\end{Lemma}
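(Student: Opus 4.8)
The plan is to reduce the statement to the standard fact that, in a stable theory, any $A$-type-definable equivalence relation on the set of realizations of a complete type over $A$ is an intersection of $A$-definable equivalence relations, and then to apply this to the equivalence relation $E$ that $\Sigma(\bar x,\bar y)$ defines on realizations of $p_{0}$ (it is an equivalence relation by Lemma 2.5). First I would observe that it suffices, for each formula $\psi(\bar x,\bar y)$ of $\Sigma$, to find an $A$-definable equivalence relation $\phi_{\psi}(\bar x,\bar y)$ such that, on realizations of $p_{0}$, one has $\Sigma(\bar x,\bar y)\vdash\phi_{\psi}(\bar x,\bar y)$ and $\phi_{\psi}(\bar x,\bar y)\vdash\psi(\bar x,\bar y)$: indeed the family $\{\phi_{\psi}:\psi\in\Sigma\}$ then agrees with $\Sigma$ on realizations of $p_{0}$, since by the first clause it is implied by $\Sigma$ there, and by the second clause it implies each $\psi\in\Sigma$, hence $\Sigma$, there. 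Passing between this ``sandwiching'' formulation and the assertion that $E=\bigcap_{i}\phi_{i}$ on realizations of $p_{0}$ is routine: in one direction use that $E\subseteq\psi$ on realizations of $p_{0}$ for each $\psi\in\Sigma$ together with compactness, and in the other that a finite intersection of $A$-definable equivalence relations is again one.

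It remains to show that $E$ is an intersection of $A$-definable equivalence relations, and this is exactly the point where we use that $T$ is $t.t.$ Being $t.t.$, $T$ is stable, and it is a standard fact about stable theories --- one incarnation of the statement that stable theories eliminate hyperimaginaries, due to Lascar and Pillay --- that every $A$-type-definable equivalence relation on the realizations of a complete type in $S(A)$ (possibly in infinitely many variables, as here, where $\bar x$ enumerates $B$) is the intersection of $A$-definable equivalence relations. Applying this to $p_{0}$ and $E$ would complete the proof; alternatively one can reprove as much of this as is needed from the theory of canonical bases and hyperimaginaries in stable theories.

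The only real obstacle is this last step. A soft compactness argument does not suffice: using Lemma 2.5 one can, for a given $\psi\in\Sigma$, replace it by a single formula $\theta\in\Sigma$ which is reflexive and symmetric on realizations of $p_{0}$ and still implies $\psi$ there, but the equivalence relation that $\theta$ generates on realizations of $p_{0}$ need not be definable, since transitive closures are not definable in general. Stability is genuinely needed, and indeed the analogue of the lemma fails in unstable theories, where a type-definable equivalence relation on a complete type need not be an intersection of definable ones. By contrast, the reduction above, the compactness manipulations, and the earlier lemmas of this section (notably Lemmas 2.2 and 2.5) are entirely routine; note that it is precisely Lemma 2.6, and not its predecessors, that invokes total transcendence.
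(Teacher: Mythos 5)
Your proposal is correct and takes essentially the same route as the paper, whose entire proof is to invoke elimination of hyperimaginaries in stable theories (citing Pillay--Poizat and Lascar--Pillay), exactly the key fact you identify; your surrounding reductions are routine elaboration of the same point.
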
 
\begin{proof} This is precisely elimination of hyperimaginaries in stable theories.  See for example \cite{Pillay-Poizat} or \cite{Lascar-Pillay}.
\end{proof}

\begin{Proposition}  (i) The $B$-closed subgroups of $Aut({\bar M}/A)$ are precisely subgroups of the form  $\{\sigma\in Aut({\bar M}/A): \sigma$ fixes $C$ pointwise\} for some $C\subseteq B$.
\newline
(i) Likewise the  closed subgroups of $Aut(B/A)$ are precisely subgroups of the form $\{\sigma\in Aut(B/A): \sigma$ fixes $C$ pointwise\} for some $C\subseteq B$. 
\end{Proposition}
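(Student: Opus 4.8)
The plan is to prove (i) and deduce (ii) by restriction. For (i), a $B$-closed subgroup $H$ of $Aut(\bar M/A)$ is, by Definition 2.4, given by some $\Sigma(\bar x,\bar b)$, and by Lemma 2.5 the formula-set $\Sigma(\bar x,\bar y)$ defines an equivalence relation on realizations of $p_{0}$. By Lemma 2.6 (elimination of hyperimaginaries in stable theories) we may replace $\Sigma$ on realizations of $p_{0}$ by a set $\{\phi_{i}(\bar x,\bar y):i\in I\}$ of $A$-definable equivalence relations. So $\sigma\in H$ iff $\models\phi_{i}(\sigma(\bar b),\bar b)$ for all $i$. For each $i$, the $\phi_{i}$-class $e_{i}$ of $\bar b$ is an imaginary element, hence (working in $T^{eq}$) an actual element of $\bar M^{eq}$; and since $\phi_{i}$ refines $p_{0}$ (or rather its restriction to $p_{0}$-realizations does), the condition $\models\phi_{i}(\sigma(\bar b),\bar b)$ is exactly the condition $\sigma(e_{i})=e_{i}$. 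I would then set $C_{0}=\{e_{i}:i\in I\}$ and check that each $e_{i}$ lies in $dcl(\bar b)=dcl(B)$, hence in $B$ (as $B$ is definably closed); so $C_{0}\subseteq B$ and $H=\{\sigma:\sigma\text{ fixes }C_{0}\text{ pointwise}\}$. Taking $C=dcl(C_{0})\subseteq B$ changes nothing about the fixing condition, giving one inclusion.

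For the reverse inclusion of (i), I must show that for any $C\subseteq B$ the pointwise stabilizer $Fix(C)=\{\sigma\in Aut(\bar M/A):\sigma|_{C}=\mathrm{id}\}$ is $B$-closed, i.e.\ cut out by some $\Sigma(\bar x,\bar b)$ that defines an equivalence relation on $p_{0}$-realizations. Here I would write $C$ (or $dcl(C)$) as enumerated by a subtuple of $\bar b$ after applying a coordinate map — more precisely, each $c\in C\subseteq dcl(B)$ is $f(\bar b)$ for some $A$-definable function $f$, and ``$\sigma$ fixes $c$'' is the $A$-definable condition $f(\sigma(\bar b))=f(\bar b)$, which defines an $A$-definable equivalence relation on realizations of $p_{0}$. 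Conjoining over all $c\in C$ (equivalently over a set of defining functions) gives $\Sigma(\bar x,\bar y)=\{f(\bar x)=f(\bar y):f\in F\}$, a set of $A$-definable equivalence relations, hence by Lemma 2.5 it defines a $B$-closed subgroup, and that subgroup is exactly $Fix(C)$. This establishes (i).

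Part (ii) then follows immediately: by Definition 2.4 a closed subgroup of $Aut(B/A)$ is $H|_{B}$ for a $B$-closed $H\le Aut(\bar M/A)$, and by (i), $H=Fix_{\bar M}(C)$ for some $C\subseteq B$; restricting to $B$ gives $H|_{B}=\{\sigma\in Aut(B/A):\sigma\text{ fixes }C\text{ pointwise}\}$ (using the standing hypothesis that every element of $Aut(B/A)$ extends to an element of $Aut(M_{A}/A)\subseteq Aut(\bar M/A)$, so the restriction of the $\bar M$-stabilizer of $C$ is exactly the $B$-stabilizer of $C$). Conversely, for $C\subseteq B$ the subgroup $\{\sigma\in Aut(B/A):\sigma|_{C}=\mathrm{id}\}$ is the restriction of the $B$-closed group $Fix_{\bar M}(C)$, hence closed by definition.

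The main obstacle, and the place to be careful, is the bookkeeping around hyperimaginaries and $T^{eq}$: one has to make sure that the $\phi_{i}$-classes of $\bar b$ really live in $dcl(B)$ rather than just in $bdd(B)$ or $dcl^{heq}(B)$, which is exactly what elimination of hyperimaginaries (Lemma 2.6) buys us — each $\phi_{i}$ is honestly $A$-definable (not merely $\bigwedge$-definable), so its classes are genuine imaginary elements of $\bar M^{eq}$, and since we work in $T=T^{eq}$ and $B$ is definably closed, those elements are in $B$. A secondary point to get right is that the restriction map $Aut(\bar M/A)\to Aut(B/A)$ carries $Fix_{\bar M}(C)$ onto $Fix_{B}(C)$ for $C\subseteq B$; this uses that $B$ is normal in $M_{A}$ (so that the relatively definable / closed subset formalism is well-behaved, as already exploited in Lemma 2.2) together with strong $\omega$-homogeneity of $M_{A}$ over $A$, both of which are in force by the totally transcendental hypothesis.
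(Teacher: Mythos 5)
Your proposal is correct and follows essentially the same route as the paper: one direction uses Lemma 2.6 (elimination of hyperimaginaries) to replace $\Sigma$ by $A$-definable equivalence relations $\phi_{i}$ on finite subtuples, whose classes are imaginaries lying in $B=dcl(B)$ in $\bar{M}^{eq}$, and the other direction writes the pointwise stabilizer of $C\subseteq B$ as the $B$-closed set cut out by equality formulas (the paper uses the coordinate formulas $x_{\alpha}=b_{\alpha}$ where you use $f(\bar{x})=f(\bar{b})$, which amounts to the same thing), with (ii) obtained by restriction exactly as in the paper.
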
 
\begin{proof} Remember we are working in ${\bar M}^{eq}$.  First fix a subset $C$ of $B$. Suppose $c\in C$ and suppose $c = b_{\alpha}$ with respect to the given enumeration ${\bar b}$ of $B$. Then the formula $x_{\alpha} = b_{\alpha}$ defines $\{\sigma\in Aut({\bar M}/A): \sigma(c) = c\}$.  So the collection of all these formulas, as $c$ ranges over $C$, defines the $B$-closed subgroup of $Aut({\bar M}/A)$ which is precisely the set of $\sigma$ which fix $C$ pointwise. 

Conversely, let $H$  be a closed subgroup of $Aut({\bar M}/A)$. By Lemma 2.7  it is defined by a collection of formulas $\phi_{i}({\bar x}, {\bar b})$  ($i\in I$) where each $
\phi({\bar x}, {\bar y})$ is a formula over $A$ defining an equivalence relation on realizations of $p_{0}$. Fix $i$. Then only finitely many variables, actually appear in $
\phi_{i}$ which we  assume for simplicity to be of the form $(x_{1},..,x_{n}, y_{1},..,y_{n})$ (where $y_{1},..,y_{n}$ correspond to $b_{1},..,b_{n}$).  Then $\phi_{i}$ 
defines an equivalence relation $E_{i}$ on realizations of $tp(b_{1},..,b_{n}/A)$, and $\phi_{1}(x_{1},..,x_{n}, b_{1},..,b_{n})$ defines $Fix(b_{1},..,b_{n}/E_{i})$, the 
automorphisms which fix this imaginary $c_{i} = (b_{1},..,b_{n})/E_{i}$. But $B$ is definably closed in ${\bar M}^{eq}$, so $c_{i}\in B$. Let $C\subseteq B$ be the set of 
these $c_{i}$. 
\newline
(ii)  follows as  the closed subgroups of $Aut(B/A)$ are by definition the intersections of $B$-closed subgroups of $Aut({\bar M}/A)$ with $Aut(B/A)$.
\end{proof} 

\begin{Remark} (i)   Proposition 2.7  is valid for any definably closed $B$ (containing $A$) under the  additional elimination of hyperimaginaries assumption.   Namely there is no prime model $M_{A}$ in the picture.
\newline
(ii) From Proposition 2.7 we get a tautological Galois correspondence between $B$-closed subgroups of $Aut({\bar M}/A)$ and definably closed subsets of $B$ containing $A$.  All that has to be shown is that if $C$ is a subset of $B$ containing $A$, then the set of elements of $B$ (even of ${\bar M}$) fixed by the group of automorphisms of $\bar M$ which fix $C$ pointwise is $dcl(C)$, which is immediate by saturation and homogeneity of $\bar M$. 
\newline
(iii) In the context of $T$ being $t.t.$, and $B$ normal over $A$ in $M_{A}$,   obtaining a full Galois correspondence between closed subgroups of $Aut(B/A)$ and definably closed subsets of $B$ containing $A$, depends again on showing that the elements of $B$ fixed by the set (group) of $\sigma\in Aut(B/A)$ which fix some $C\subseteq B$ is precisely $dcl(A,C)$.  This requires an additional assumption on $B$ which we introduce below. 
\end{Remark}

Recall the basic notion of ``Tarski-Vaught" (T-V) subset:  Let  $C\subseteq D$ be sets of parameters in $\bar M$.  We will say that $C\leq _{T-V}D$ (in the sense of $\bar M$) if whenever $\phi(x)$ is an $L_{C}$-formula such that  ${\bar M}\models \phi(d)$ for some $d$ from $D$ then there is $c$ from $C$ such that ${\bar M} \models \phi(c)$.  

\vspace{2mm}
\noindent
Our underlying assumptions are still that $T = T^{eq}$ is $t.t.$ $A\subseteq B$ are definably closed sets, $B\subseteq M_{A}$ and $B$ is normal over $A$ in $M_{A}$. 

\begin{Definition} We say that $B$ is minimal (over $A$ in $M_{A}$) if there is no $C\supseteq A$ with $C\leq _{TV}B$ and $C\neq B$. 
\end{Definition}

\begin{Remark} We know (see Lemma 18.4 of \cite{Poizat-course} for example) that normality of $B$ over $A$ (in $M_{A}$) implies that $M_{A}$ is also prime over $B$.  The following gives an extension.

\end{Remark}

\begin{Lemma} Suppose $B$ is (normal and) minimal over $A$. Then for any  $C$ with $A\subseteq C \subseteq B$, $M_{A}$ is also prime over $C$. 
\end{Lemma}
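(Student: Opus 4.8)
The plan is to reduce primality of $M_A$ over $C$ to the two facts we already have in hand: that $M_A$ is prime over $B$ (Remark 2.13, citing Lemma 18.4 of \cite{Poizat-course}), and that in a $t.t.$ theory prime models over a set are atomic and the prime model is unique. So first I would recall that, since $M_A$ is atomic over $A$ and hence over $C$ (every finite tuple from $M_A$ has an isolated type over $A$, a fortiori over $C$), it suffices to show that $M_A$ is \emph{constructible} (or at least that there is \emph{some} prime model over $C$ sitting inside $M_A$, and that $M_A$ is atomic over $C$); by uniqueness of prime models in $t.t.$ theories this prime model over $C$ embeds over $C$ into $M_A$, and the real work is to see the embedding is onto.

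The key step is to exploit minimality. Let $M_C$ be the prime model over $C$, realized inside $M_A$ (it embeds over $C$ since $M_A$ is atomic over $C$). I want to show $M_C = M_A$. Consider $B \cap M_C$, or better, consider $\mathrm{dcl}(C \cup M_C)=M_C$ and ask whether $B \subseteq M_C$. The natural move is: $C \subseteq M_C$ and $M_C \preceq M_A$, so $M_C \leq_{T\!-\!V} M_A$; in particular, setting $C' = M_C \cap B$ (or $C' = \mathrm{dcl}(M_C) \cap B$), I claim $A \subseteq C' \leq_{T\!-\!V} B$. Indeed if $\phi(x)$ is an $L_{C'}$-formula satisfied by some $b \in B \subseteq M_A$, then since $M_C \preceq M_A$ and the parameters lie in $M_C$... — here one has to be slightly careful, because a witness in $M_C$ need not lie in $B$. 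So instead I would run the Tarski–Vaught test directly for $C'$ inside $B$: given $\phi(x)$ over $C'$ with a solution in $B$, it has a solution in $M_A$; but is there one in $B$? Yes: $B$ is normal, and $\phi$ is over $M_C \cap B$, and realizations of a complete type over $A$-parameters-in-$B$... The cleanest route is: show $B \subseteq M_C$ by a direct argument — take $b \in B$, then $\mathrm{tp}(b/A)$ is isolated (atomicity of $M_A$ over $A$), say by $\chi(x)$ over $A \subseteq C$; $\chi(x)$ is then over $C$, and since $M_C \preceq M_A$ realizes everything consistent detectable over $C$, $M_C$ contains a realization $b'$ of $\mathrm{tp}(b/C)$; but we need $b$ itself. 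This is exactly where normality + minimality must combine.

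So the argument I would actually carry out: let $D = \mathrm{dcl}(M_C) \cap B$. Then $A \subseteq D \subseteq B$, and I claim $D \leq_{T\!-\!V} B$. Take an $L_D$-formula $\phi(x)$ with a solution $b \in B$. Then $b \in M_A$ and $\phi$ has parameters in $M_C \preceq M_A$, so $\phi(x)$ has a solution in $M_C$; if that solution lies in $B$ we are done, but even if not, apply an automorphism: pick $\sigma \in \mathrm{Aut}(M_A/A)$ — actually one wants $\sigma$ fixing the parameters of $\phi$. Hmm; the honest statement is that $\mathrm{tp}(b/\mathrm{dcl}(M_C \cap B))$ has a realization in $M_C$, and by normality \emph{every} realization of the corresponding type over $A$ lying in $M_A$ is in $B$ — but the parameters are not over $A$. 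I therefore expect the main obstacle to be precisely this mismatch between the $A$-based normality of $B$ and the $C$-based (or $D$-based) Tarski–Vaught condition, and the fix is to observe that for $b \in B$, normality of $B$ over $A$ means $b \in B$ is already a "closed" condition: any $\sigma \in \mathrm{Aut}(M_A/A)$ keeps $B$ setwise invariant, so $\mathrm{Aut}(M_A/A)\cdot b \subseteq B$, and combining with the fact that $M_C$ is $\mathrm{Aut}(M_A/C)$-"big enough" should force $b \in \mathrm{dcl}(M_C)$. Once $D \leq_{T\!-\!V} B$ is established, minimality of $B$ over $A$ forces $D = B$, i.e. $B \subseteq \mathrm{dcl}(M_C) = M_C$; then $M_A$, being prime over $B$, embeds over $B$ into $M_C \subseteq M_A$, and atomicity plus uniqueness of the prime model give $M_C = M_A$, i.e. $M_A$ is prime over $C$. $\blacksquare$
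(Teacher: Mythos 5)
Your overall strategy is exactly the paper's: realize a prime model $M_{C}$ over $C$ inside $M_{A}$, set $D = M_{C}\cap B$, prove $D \leq_{TV} B$, conclude $D = B$ by minimality so that $B\subseteq M_{C}$, and finish via Remark 2.11 and uniqueness of prime models over $B$. The problem is that the one step carrying all the content --- the verification that $D\leq_{TV}B$ --- is precisely the step you leave unproved. You correctly locate the obstacle (a witness for $\phi$ found in $M_{C}$ need not lie in $B$, and normality of $B$ is over $A$ while the parameters of $\phi$ come from $D$), but your proposed resolution (``$b\in B$ is a closed condition\dots\ $M_{C}$ is big enough\dots\ should force $b\in \mathrm{dcl}(M_{C})$'') is an expectation, not an argument; as written the proof does not go through.

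The missing idea is a one-line strengthening of the formula. Write the instance as $\phi(d,x)$ with $\phi$ over $A$ and $d$ a finite tuple from $D$, with a solution $b'$ in $B$. Let $\psi(x)$ isolate $tp(b'/A)$ (which exists since $M_{A}$ is atomic over $A$), and replace $\phi(d,x)$ by $\phi(d,x)\wedge\psi(x)$. This formula is still over $D$ (as $A\subseteq D$) and is still satisfied by $b'$, so it has a solution $b''$ in $M_{C}\preceq M_{A}$; but now \emph{every} solution of $\psi$ in $M_{A}$ realizes $tp(b'/A)$ and hence lies in $B$ by normality, so $b''\in M_{C}\cap B = D$. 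This is exactly how the paper converts ``membership in $B$'' into an $A$-definable condition that any witness must satisfy, resolving the mismatch you flagged. The remainder of your outline (minimality gives $D=B$; then $M_{C}$ contains $B$, is prime over $B$, and is isomorphic to $M_{A}$ over $B$, hence over $C$) is fine modulo standard facts.
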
 
\begin{proof} Let $M_{C}$ be a prime model over $C$ contained in $M_{A}$.
\newline
{\em Claim.} $B\subseteq M_{C}$.
\newline
{\em Proof of claim.}  Consider $D = M_{C}\cap B$.  Let $d$ be a (finite) tuple from $D$ and $b'$ a (finite) tuple from $B$ and $\phi$ a formula over $A$ say such that  $\models \phi(d,b')$. Now $tp(b'/A)$ is isolated by a formula $\psi(y)$.  
So the formula $\phi(d,y) \wedge \psi(y)$ is realized in $M_{A}$, so also realized in $M_{C}\prec M_{A}$, by $b''$ say. 
But all realizations of $\psi(y)$ in $M_{A}$ are in $B$ (by normality), so $b''\in M_{C}\cap B$. 
We have shown that $D \leq_{TV} B$. Hence by our assumptions $D = B$ and so $B\subseteq M_{C}$.  \qed

Now $M_{C}$ is clearly atomic over $A$, so prime over $A$. Now (by the claim) $B\subseteq M_{C}$ and is clearly normal over $C$ in $M_{C}$ whereby, using Remark 2.11, $M_{C}$ is also prime over $B$.  So by Remark 2.11 and uniqueness $M_{C}$ is isomorphic to $M_{A}$ over $B$, so also over $C$, and so $M_{A}$ is prime over $C$.
\end{proof}

\begin{Lemma} Assume $B$ (normal and) minimal in $M_{A}$. Let $C$ be definably closed with $A\subseteq C \subseteq B$. Let $H = \{\sigma\in Aut(B/A): \sigma$ fixes $C$ pointwise\}. Then $C$ is precisely the set of elements of $B$ fixed by $H$.
\end{Lemma}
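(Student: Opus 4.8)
The plan is to prove the two inclusions separately. The inclusion $C\subseteq\{b\in B:\ b\text{ is fixed by }H\}$ is immediate from the definition of $H$, so the content is the converse: given $b\in B$ fixed by every element of $H$, I must show $b\in C$. Since $C$ is definably closed, this is the same as showing $b\in dcl(C)$.

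First I would invoke the preceding Lemma: as $B$ is normal and minimal over $A$ and $A\subseteq C\subseteq B$, the prime model $M_{A}$ is also prime over $C$, hence atomic and strongly $\omega$-homogeneous over $C$. Next I would upgrade the hypothesis: $b$ is in fact fixed by all of $Aut(M_{A}/C)$, not merely by $H$. Indeed, for $\sigma\in Aut(M_{A}/C)$ and $b'\in B$, the element $\sigma(b')$ realizes $tp(b'/A)$ and lies in $M_{A}$, so $\sigma(b')\in B$ by normality of $B$ over $A$; applying this to both $\sigma$ and $\sigma^{-1}$ gives $\sigma(B)=B$, so $\sigma|_{B}$ is a $C$-elementary permutation of $B$, i.e. $\sigma|_{B}\in H$. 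Hence $\sigma(b)=\sigma|_{B}(b)=b$.

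Now suppose for contradiction that $b\notin dcl(C)$. Since $M_{A}$ is atomic over $C$ and $b\in M_{A}$, the type $tp(b/C)$ is isolated by some $L_{C}$-formula $\psi(x)$, and since $b\notin dcl(C)$ this $\psi$ has at least two realizations in $\bar M$. If $M_{A}$ contained a realization $b'\neq b$ of $\psi$ — equivalently, of $tp(b/C)$, since $\psi$ isolates that complete type — then strong $\omega$-homogeneity of $M_{A}$ over $C$ would yield $\sigma\in Aut(M_{A}/C)$ with $\sigma(b)=b'\neq b$, contradicting the previous paragraph. So $b$ is the unique realization of $\psi$ in $M_{A}$, i.e. $M_{A}\models\neg\exists x\exists y\,(\psi(x)\wedge\psi(y)\wedge x\neq y)$; since $M_{A}\prec\bar M$, the same sentence holds in $\bar M$, so $\psi$ has exactly one realization in $\bar M$, contradicting that it has at least two. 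Therefore $b\in dcl(C)=C$, which finishes the proof.

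The step I expect to carry the real weight is the very first one — being able to work over $C$ exactly as one works over $A$, i.e. the fact that $M_{A}$ is prime (so atomic and strongly $\omega$-homogeneous) over $C$ — and this is precisely where minimality of $B$ enters, through the preceding Lemma. Everything after that is routine: normality of $B$ is used only to pass from $H$-invariance of $b$ to $Aut(M_{A}/C)$-invariance of $b$, and the closing contradiction is the standard observation that an isolated type realized uniquely in an elementary submodel is realized uniquely in the monster model.
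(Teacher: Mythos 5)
Your proof is correct and follows essentially the same route as the paper's: reduce to $M_{A}$ being prime (hence atomic and $\omega$-homogeneous) over $C$ via the preceding lemma, use isolation of $tp(b/C)$ together with $b\notin dcl(C)$ to produce a second realization in $M_{A}$, and use normality to see that the resulting $C$-automorphism restricts to an element of $H$ moving $b$. The only cosmetic difference is that you phrase the last step as a contradiction via uniqueness of the realization in $M_{A}\prec\bar M$, where the paper directly exhibits the second realization.
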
 
\begin{proof}  Suppose that $b\in B\setminus C$. As $M_{A}$ is prime over $C$, $tp(b/C)$ is isolated. As $b\notin dcl(C)$, $tp(b/C)$ has another realization $b'$ in $M_{A}$ which will be in $B$ by normality., As $M_{A}$ is $\omega$-homogeneous over $C$ there is $\sigma\in Aut(M_{A}/C)$ with $\sigma(b) =b'$. Then $\sigma|B \in Aut(B/A)$ and moves $b$. 

\end{proof} 

Hence, bearing in mind Proposition 2.7 (ii), we have:
\begin{Theorem}\label{thm: correspondence} Suppose $A\subseteq B \subseteq M_{A}$ with $B$ definably closed and both normal and minimal (over $A$ in $M_{A}$). Then there is a Galois correspondence between definably closed subsets of $B$ containing $A$ and closed subgroups $H$ of ${\mathcal G} = Aut(B/A)$. Namely $C$ corresponds to 
$H_{C} = \{\sigma\in {\mathcal G}: \sigma$ fixes pointwise $C \}$,  $H$ corresponds to $C_{H}$ the set of elements of $B$ fixed by $H$, and $C_{H_{C}} = C$ and $H_{C_{H}} = H$. 
\end{Theorem}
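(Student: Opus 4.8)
The plan is to assemble the three preceding results — Proposition 2.7(ii), Lemma 2.12, and Lemma 2.13 — into the asserted bijection. Define the map $\Phi\colon C\mapsto H_{C}=\{\sigma\in{\mathcal G}:\sigma \text{ fixes } C \text{ pointwise}\}$ on the poset of definably closed $C$ with $A\subseteq C\subseteq B$, and the map $\Psi\colon H\mapsto C_{H}=\{b\in B:\sigma(b)=b \text{ for all }\sigma\in H\}$ on the poset of closed subgroups $H$ of ${\mathcal G}=Aut(B/A)$. Both maps are visibly inclusion-reversing, so a Galois correspondence amounts to showing (a) that $\Phi$ and $\Psi$ take values in the claimed posets, and (b) that $\Psi\circ\Phi$ and $\Phi\circ\Psi$ are the identity; part (b) is exactly the two equations $C_{H_{C}}=C$ and $H_{C_{H}}=H$ in the statement.

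For (a): the set $H_{C}$ is a closed subgroup of ${\mathcal G}$ by the ``easy'' direction of Proposition 2.7(ii) — writing $C=\{b_{\alpha}:\alpha\in J\}$ in terms of the fixed enumeration ${\bar b}$ of $B$, the family of formulas $x_{\alpha}=b_{\alpha}$ ($\alpha\in J$) defines the $B$-closed subgroup of $Aut({\bar M}/A)$ whose restriction to $B$ is $H_{C}$. Conversely $C_{H}$ is the set of elements of $B$ fixed by a group of elementary-over-$A$ permutations; it contains $A$ (every element of ${\mathcal G}$ fixes $A$ pointwise), and it is definably closed because $B$ is definably closed in ${\bar M}^{eq}$ and anything in $dcl(C_{H})$ is fixed by every $\sigma\in H$, hence lies in $C_{H}$. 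So $A\subseteq C_{H}\subseteq B$ with $C_{H}$ definably closed.

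For (b): the identity $C_{H_{C}}=C$ for every definably closed $C$ with $A\subseteq C\subseteq B$ is precisely Lemma 2.13 — this is the step into which the hypotheses on $B$ have been pushed, via Lemma 2.12 (normality and minimality make $M_{A}$ prime, hence atomic and $\omega$-homogeneous, over every such $C$). For $H_{C_{H}}=H$: given a closed subgroup $H$ of ${\mathcal G}$, Proposition 2.7(ii) gives some $C_{0}\subseteq B$ with $H=H_{C_{0}}$. Set $C=dcl(A,C_{0})$; since $A\cup C_{0}\subseteq B$ and $B$ is definably closed we get $A\subseteq C\subseteq B$, and clearly $H_{C}=H_{C_{0}}=H$ since an automorphism fixes $C_{0}$ pointwise iff it fixes $dcl(A,C_{0})$ pointwise. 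Now apply Lemma 2.13 to this definably closed $C$: $C_{H}=C_{H_{C}}=C$, and therefore $\Phi(\Psi(H))=H_{C_{H}}=H_{C}=H$. Together with (a) this yields the inclusion-reversing bijection.

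I do not expect a genuine obstacle at this point — the content has been front-loaded into Lemmas 2.12–2.13 and Proposition 2.7, and what remains is bookkeeping with two inclusion-reversing maps. The one place to be slightly careful is that Proposition 2.7(ii) a priori represents a closed subgroup by an \emph{arbitrary} subset $C_{0}\subseteq B$, so before invoking Lemma 2.13 (which is stated for \emph{definably closed} $C$) one must replace $C_{0}$ by $dcl(A,C_{0})$, and observe that this set remains inside $B$ exactly because $B$ was assumed definably closed. The rest is immediate.
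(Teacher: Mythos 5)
Your proof is correct and is essentially the paper's own argument: the paper states the theorem as an immediate consequence of the fixed-point lemma (``$C$ is precisely the set of elements of $B$ fixed by $H_C$'') together with Proposition 2.7(ii), and you have supplied exactly the intended bookkeeping, including the one genuine subtlety of replacing the arbitrary $C_0\subseteq B$ produced by Proposition 2.7(ii) with $dcl(A,C_0)\subseteq B$ before invoking that lemma. The only discrepancy is notational: your references are shifted by one relative to the paper's numbering (the prime-model lemma is Lemma 2.11 and the fixed-point lemma is Lemma 2.12, the theorem itself being 2.13).
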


Finally in this section we make the connection with works of Poizat referred to above. In the context of $T$ totally transcendental, Poizat defines $A_{min}$,  the {\em minimal closure}  of $A$ to be the intersection of all $f(M_{A})$ as $f$ ranges over $A$-elementary embeddings of $M_{A}$ into itself. The notation $A_{min}$ comes from \cite{Poizat-course}.  In \cite{Poizat-theories-stables} Poizat uses the notation $Cl(A)$ for $A_{min}$ where an equivalent definition of $A_{min}$ is given, in terms of {\em atomic algebraicity}.  %In this paper we will use the notation $mcl(A)$. 

\begin{Lemma} 

\begin{enumerate}
    \item $A_{min}$ is the greatest (under inclusion) $B$ with $A\subseteq B \subseteq M_{A}$ such that $B$ is normal and minimal (over $A$ in $M_{A}$).
    \item Any subset $B$ of $A_{min}$ is minimal (namely there is no proper subset $C$ of $B$ containing $A$ with $C\leq_{TV}B$).  
\end{enumerate}
\end{Lemma}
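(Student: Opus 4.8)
The plan is to first record the ``atomic--algebraicity'' reformulation of $A_{min}$ and then read off both parts. First I would note that $A_{min}=\bigcap\{N:A\subseteq N\subseteq M_{A},\ N\models T\}$: each $f(M_{A})$ from the definition of $A_{min}$ is such a model $N$, and conversely any such $N$, since $M_{A}$ is prime over $A$, admits an $A$-elementary embedding $h\colon M_{A}\to N$, so $\bigcap_{g}g(M_{A})\subseteq h(M_{A})\subseteq N$. From this I would extract the pointwise description: for $a\in M_{A}$, one has $a\in A_{min}$ iff $tp(a/C)$ is isolated for every $C$ with $A\subseteq C\subseteq M_{A}$. For the direction used below, given such a $C$, let $N_{C}$ be the image of a $C$-elementary embedding into $M_{A}$ of the prime model over $C$; then $N_{C}$ is a model with $A\subseteq N_{C}\subseteq M_{A}$, so it contains $A_{min}$ (hence $a$), and it is atomic over $C$, so $tp(a/C)$ is isolated. (The converse, which I would include for completeness though it is not needed, is the standard remark that an isolated type over $C$ with a realization $c\in C$ equals $tp(c/C)$, so its unique realization is $c$.) This equivalence is in essence Poizat's, so one may alternatively just cite \cite{Poizat-theories-stables}.

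Granting this, part~2 is short. Suppose $A\subseteq C\subseteq B\subseteq A_{min}$ with $C\leq_{TV}B$, and fix $b_{0}\in B$. Since $b_{0}\in A_{min}$ and $A\subseteq C\subseteq M_{A}$, $tp(b_{0}/C)$ is isolated by some $L_{C}$-formula $\phi(x)$; as $\phi$ is realized in $B$ (by $b_{0}$), by $C\leq_{TV}B$ it has a realization $c_{0}\in C$, whence $tp(c_{0}/C)=tp(b_{0}/C)$; since ``$x=c_{0}$'' lies in $tp(c_{0}/C)$ this forces $b_{0}=c_{0}\in C$. Hence $B\subseteq C$, i.e.\ $C=B$. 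Specializing to $B=A_{min}$ shows $A_{min}$ is minimal, one of the three ingredients of part~1.

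For the remainder of part~1 I would show $A_{min}$ is normal and contains every normal minimal $B$. Normality is pure invariance: for $\sigma\in Aut(M_{A}/A)$, $\sigma(A_{min})=\bigcap_{f}(\sigma\circ f)(M_{A})$, and $f\mapsto\sigma\circ f$ permutes the $A$-elementary self-embeddings of $M_{A}$, so $\sigma(A_{min})=A_{min}$. For maximality, let $B$ be normal and minimal over $A$ in $M_{A}$ and let $f\colon M_{A}\to M_{A}$ be any $A$-elementary embedding. I would check $f(M_{A})\cap B\leq_{TV}B$ by the Tarski--Vaught computation already used to prove that $M_{A}$ is prime over intermediate sets: given finite tuples $\bar d$ from $f(M_{A})\cap B$ and $\bar b$ from $B$ with $\models\phi(\bar d,\bar b)$ over $A$, let $\chi(\bar y)$ isolate $tp(\bar b/A)$; then $\phi(\bar d,\bar y)\wedge\chi(\bar y)$ is realized in $M_{A}$, hence in $f(M_{A})\prec M_{A}$ since the parameters $\bar d$ lie there, by some $\bar b''$ which lies in $B$ by normality, hence in $f(M_{A})\cap B$. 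By minimality of $B$, $f(M_{A})\cap B=B$, i.e.\ $B\subseteq f(M_{A})$; as $f$ was arbitrary, $B\subseteq A_{min}$.

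The only real content is the reformulation in the first paragraph -- in particular the point that the prime model over an arbitrary intermediate $C$ can be taken inside $M_{A}$, where it automatically absorbs $A_{min}$; everything afterwards is routine. In the write-up I would also settle the minor conventions around $\leq_{TV}$ (single variable versus finite tuples) and read ``$C\neq B$'' in the definition of minimality as ``$A\subseteq C\subsetneq B$'', but these do not affect the arguments.
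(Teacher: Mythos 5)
Your proof is correct, but it takes a more self-contained route than the paper, which at the corresponding points simply cites Poizat. The paper gets normality of $A_{min}$ from Corollaire 7 of \cite{Poizat-theories-stables} and minimality (and part~2) from Corollaire 5 there, namely the fact that $A_{min}$ (indeed $M_{A}$) is atomic over every intermediate $C\supseteq A$; the ``desired result follows'' step for part~2 is exactly the isolated-type-realized-in-$C$ argument you spell out. You instead reprove the atomicity fact from scratch via the characterization of $A_{min}$ as the intersection of all elementary submodels of $M_{A}$ containing $A$ (note you should say \emph{elementary} submodel rather than just ``$N\models T$'' for the converse inclusion $\bigcap_{g}g(M_{A})\subseteq N$ to give an $A$-elementary self-embedding of $M_{A}$; the models you actually use, the $f(M_{A})$ and your $N_{C}$, are all elementary in $M_{A}$, so nothing breaks). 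Your normality argument ($f\mapsto\sigma\circ f$ permutes the self-embeddings) is the natural direct proof of what Corollaire 7 asserts. For maximality the two arguments differ slightly but are equally valid: the paper observes $f(B)\subseteq B$ and $f(B)\leq_{TV}B$, then invokes minimality to get $f(B)=B\subseteq f(M_{A})$; you show $f(M_{A})\cap B\leq_{TV}B$ by rerunning the Tarski--Vaught computation from the Claim in Lemma~2.11, which has the small advantage of not needing the (true but unproved) ``clearly $f(B)\leq_{TV}B$'' step. What your version buys is independence from the Poizat reference; what the paper's buys is brevity and an explicit pointer to where the underlying structure theory of $A_{min}$ lives.
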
  
\begin{proof} 
\begin{enumerate}
    \item Corollaire 7 of \cite{Poizat-theories-stables} (actually there are two Corollaire 7's) say that $f(A_{min}) = A_{min}$ for every $A$-automomorphism of $M_{A}$, so we have normality.  Corollaire 5 of \cite{Poizat-theories-stables} says in particular that $A_{min}$ is atomic over any subset of itself which contains $A$, so if $B\supseteq A$ is a proper subset of $A_{min}$, we could not have that $B\leq_{TV}A_{min}$.
    
    Now suppose that $B\supseteq A$ is normal and minimal (over $A$ in $M_{A}$).  Let $f$ be an $A$-elementary embedding of $M_{A}$ into itself.
    
    As $B$ is normal, $f(B) \subseteq B$ and we clearly have $f(B)\leq_{TV} B$. By minimality of $B$, $f(B) = B$. Hence $B\subseteq f(M)$, so $B\subseteq A_{min}$. 
    \item  By part 1 and Lemma 2.11 (or Corollaire 5 of \cite{Poizat-theories-stables}), $M_{A}$ is prime over any subset $C$ of itself which contains $A$. The desired result follows.

%Let $B$ be a subset of $mcl(A)$.  Suppose $B$ is not minimal. Then there is a $C \leq_{TV} B$ with $C$ properly contained in $B$. Let $b \in B\backslash C$, then as $M_A$ is still prime over any subset of $mcl(A)$ \cite{Poizat-course}, $M_A$ is prime over $C$. So let $\phi(\bar{x})$ isolate $tp(b/C)$. By $C \leq_{TV} B$, there is a $c \in C$ realizing $\phi(\bar{x})$, so the formula $x = c$ is in this type. This contradicts $b \in B\backslash C$.
\end{enumerate}
\end{proof}

So bearing in mind the last lemma, Poizat's results, especially from \cite{Poizat-theories-stables} give a lot of information about minimal normal sets.

\section{The Picard-Vessiot closure} 
The ambient theory here is $DCF_{0}$ the theory of differentially closed  fields of characteristic $0$ in the language $L_{\partial}$ of rings with a derivation (so the ring language) together with symbol $\partial$.  We may use just $L$ for the ring language. $DCF_{0}$ is complete, $\omega$-stable, with quantifier elimination in $L_{\partial}$. $DCF_{0}$ also has elimination of imaginaries so $DCF_{0} = DCF_{0}^{eq}$. 

We work in a big saturated model ${\mathcal U}$ of $DCF_{0}$.  $K$ will denote a ``small" differential subfield. We will write $K^{diff}$  (for differential closure of $K$) for the prime model $M_{K}$ over $K$. We assume that $C_{K}$, the field of constants of $K$, is algebraically closed. This assumption has a number of consequences, for example $C_{K^{diff}} = C_{K}$. 

We repeat material from Section 3 of  \cite{Pillay-aut} and Section 1.2 of \cite{Meretzky-Pillay-torsor}. See also \cite{MagidI}, \cite{MagidII}.  However as mentioned in the introduction we will follow the  conventions from \cite{Wibmer-regular-singular}.
A (homogeneous) linear differential equation (in vector form) over $K$ is an equation of the form $\partial Y = AY$ where $Y$ is a $n\times 1$ column vector of indeterminates and $A$ is an $n\times n$ matrix over $K$. The solution set ${\mathcal Y}(K^{diff})$ of the equation in $K^{diff}$ (a collection of $n\times 1$ column vectors from $K^{diff}$)  is an $n$-dimensional vector space  over $C_{K} = C_{K^{diff}}$. The (differential) field $L$ generated over $K$ by the set of all these solutions, equivalently by a $C_{K}$-basis for the solution set, is a called the $PV$ (Picard-Vessiot) extension of $K$ for the given equation.   
Given a family ${\mathcal F}$ of linear  $DE$'s over $K$  (say $\{\partial Y_i = A_iY_i : i\in I\}$) by the $PV$-extension of $K$ for the family ${\mathcal F}$ we mean the differential field extension of $K$ generated by the solutions in $K^{diff}$ of all these equations.  By definition a $PV$ extension $L$ of $K$ is a $PV$  extension for some such family ${\mathcal F}$. We say that $L$ is of {\em finite type} if ${\mathcal F}$ is a single equation (equivalently finitely many equations).  Note that a $PV$ extension $L$ of $K$ is normal and minimal over $K$ (in $K^{diff}$).  Normality is immediate.  For minimality: suppose $K\leq L_{1}\leq_{TV}L$.  Then $L$ will contain a $C_{K}$-basis of the solution set (in $K^{diff}$) of each equation in ${\mathcal F}$. So $L_{1} = L$.

Let us note in passing that the first order  vector form formalism for linear differential equations subsumes and is equivalent to the formalism of higher order homogeneous linear differential equations of one variable.   An order $n$ such equation over $K$ is of the form $y^{(n)} + a_{n-1}y^{(n-1)} + ... + a_{1}y' + a_{0}y$ where the $a_{i}\in K$.  $K^{PV}$ is generated by solutions in $K^{diff}$ of such equations. Moreover given such an equation, one can form an equation in vector form over $K$, $\partial Y = AY$ where $Y$ is an $n\times 1$ column vector of indeterminates such that the solutions of the latter are precisely of the form $(y,\partial(y),...,\partial^{(n-1)}(y))^{t}$ for $y$ a solution of the order $n$ equation.

When  $L$ is a $PV$ extension of $K$ for a single  equation $\partial Y = AY$  then $Aut(L/K)$  (as differential fields) has the structure of an algebraic subgroup of $GL_{n}(C_{K})$ (where $n$ is the length of the column  vector $Y$).
In general $Aut(L/K)$ is an inverse limit of such groups, so a proalgebraic group with respect to the algebraically closed field $C_{K}$. 

By $K^{PV}$  we mean the $PV$-extension of $K$ for the family of {\em all} linear $DE$'s over $K$.  $K^{PV}$ is clearly the greatest $PV$ extension of $K$. (Magid writes $K_{1}$ for $K^{PV}$.)

As $K^{PV}$ is normal over $K$ in $K^{diff}$ we have, by Remark 2.10 that $K^{diff}$ is also a differential closure of $K^{PV}$.  So we can continue the construction to obtain $K^{PV_{2}} = (K^{PV})^{PV}$ the diffferential field generated by all solutions in $K^{diff}$ of all linear differential equations over $K^{PV}$.  Then $K^{PV_{2}}$ is also normal (and minimal) over $K$ in $K^{diff}$, whereas $K^{diff}$ is also a differential closure of $K^{PV_{2}}$.  We continue this was to build $K^{PV_{n}}$ inside $K^{diff}$ for all $n$. We let $K^{PV_{\infty}} = \cup_{n}K^{PV_{n}}$.  To summarise:

\begin{Lemma} (i) Each of $K^{PV_{n}}$ and $K^{PV_{\infty}}$ is normal and minimal over $K$ inside $K^{diff}$ (so by Lemma 2.14 contained in $K_{min}$). 
\newline
(ii)  $K^{PV_{\infty}}$ is $PV$-closed (has no proper $PV$ extension) and is in the fact  $PV$-closure of $K$, smallest $PV$-closed differentrial subfield of $K^{diff}$ containing $K$. 
\end{Lemma}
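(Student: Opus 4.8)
The plan is to get part (i) from a single induction, then obtain part (ii) by bookkeeping on the coefficients of linear equations. For part (i), the claim I would prove, by induction on $n$, is: \emph{for every $K$-elementary embedding $f$ of $K^{diff}$ into itself, $f(K^{PV_n}) = K^{PV_n}$.} The case $n=0$ is trivial. Assuming it for $n$, the inductive hypothesis makes $f|_{K^{PV_n}}$ a differential automorphism of $K^{PV_n}$ over $K$, hence one fixing $C_K$ pointwise (note $C_{K^{PV_n}} = C_K$, since $C_K \subseteq C_{K^{PV_n}} \subseteq C_{K^{diff}} = C_K$, using that $C_K$ is algebraically closed). Now $K^{PV_{n+1}}$ is generated over $K^{PV_n}$ by the set $S$ of all solutions in $K^{diff}$ of all linear differential equations $\partial Y = AY$ over $K^{PV_n}$. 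Since $f$ commutes with $\partial$ and preserves $K^{PV_n}$ setwise, it carries the solution space of $\partial Y = AY$ into that of $\partial Y = f(A)Y$, an equation again over $K^{PV_n}$; so $f(S) \subseteq S$. Conversely, given a linear equation $\partial Y = BY$ over $K^{PV_n}$, set $B' = (f|_{K^{PV_n}})^{-1}(B)$, still over $K^{PV_n}$; then $f$ restricts to an injective $C_K$-linear map from the solution space of $\partial Y = B'Y$ into that of $\partial Y = BY$, and these two spaces have the same (finite) dimension over $C_K$ — the order of the equation, $K^{diff}$ being differentially closed — so the injection is onto. Hence $S \subseteq f(S)$, so $f(S) = S$, and therefore $f(K^{PV_{n+1}}) = f(K^{PV_n}\langle S\rangle) = K^{PV_n}\langle S\rangle = K^{PV_{n+1}}$.

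Granting the claim, part (i) follows. Applying it to $f \in Aut(K^{diff}/K)$ shows each $K^{PV_n}$ is setwise invariant, i.e.\ normal over $K$ in $K^{diff}$; applying it to all $K$-elementary self-embeddings $f$ of $K^{diff}$ gives $K^{PV_n} \subseteq \bigcap_f f(K^{diff}) = K_{min}$. Since every subset of $K_{min}$ containing $K$ is minimal over $K$ (the second part of Lemma 2.14), $K^{PV_n}$ is minimal. Finally $K^{PV_\infty} = \bigcup_n K^{PV_n}$ is a union of normal subsets of $K_{min}$, hence is itself normal and contained in $K_{min}$, hence (again by Lemma 2.14) minimal. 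This is part (i).

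For part (ii): since each $K^{PV_n}$ is normal over $K$, Remark 2.10 makes $K^{diff}$ also a differential closure of $K^{PV_n}$, so $K^{PV_{n+1}} = (K^{PV_n})^{PV}$ is generated over $K^{PV_n}$ by the solutions \emph{in $K^{diff}$} of all linear differential equations over $K^{PV_n}$. Now let $\partial Y = AY$ be an arbitrary linear equation over $K^{PV_\infty}$: as $A$ has finitely many entries it is defined over some $K^{PV_n}$, so its whole solution space in $K^{diff}$ lies in $K^{PV_{n+1}} \subseteq K^{PV_\infty}$. Hence the $PV$-extension of $K^{PV_\infty}$, formed inside its differential closure $K^{diff}$, is $K^{PV_\infty}$ itself, so $K^{PV_\infty}$ is $PV$-closed. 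For minimality among $PV$-closed intermediate fields, let $K \subseteq F \subseteq K^{diff}$ be $PV$-closed; since $C_F = C_K$ is algebraically closed, a $PV$-extension of $F$ for one equation is unique up to $F$-isomorphism with no new constants, so $F$ is $PV$-closed exactly when it contains the full ($C_K$-spanned) solution space in $K^{diff}$ of every linear differential equation over $F$. Then $K^{PV_n} \subseteq F$ for all $n$ by induction: if $K^{PV_n}\subseteq F$, every linear equation over $K^{PV_n}$ is over $F$, so its $K^{diff}$-solutions lie in $F$, whence $K^{PV_{n+1}} \subseteq F$; therefore $K^{PV_\infty} \subseteq F$.

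The step I expect to be the main obstacle is the reverse inclusion $S \subseteq f(S)$ in the inductive step of part (i) — that $f$ acts \emph{surjectively}, not merely into, on the solutions of linear equations over $K^{PV_n}$. This is precisely where algebraic closedness of $C_K$ is indispensable: it forces $C_{K^{PV_n}} = C_K$ and guarantees every linear equation over $K^{PV_n}$ has a full-dimensional solution space in $K^{diff}$, so the $C_K$-linear injection induced by $f$ between two equidimensional solution spaces must be onto. A minor bookkeeping point, in part (ii), is pinning down the working form of ``$PV$-closed'' for a subfield of $K^{diff}$ not assumed normal over $K$; the equivalence noted above (valid again because $C_K$ is algebraically closed) takes care of it.
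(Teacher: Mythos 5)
Your proof is correct, but it reaches minimality by a genuinely different route than the paper. The paper's argument (given in the discussion preceding the lemma) proves minimality of a $PV$ extension $L$ of $K$ directly by Tarski--Vaught: any $L_1$ with $K\le L_1\le_{TV}L$ must already contain a $C_K$-basis of the solution space of each defining equation, hence equals $L$; it then iterates this normality-and-minimality step through the tower and deduces containment in $K_{min}$ from ``normal and minimal'' via Lemma 2.14(1). You reverse the logical direction: your single induction shows that \emph{every} $K$-elementary self-embedding of $K^{diff}$ carries $K^{PV_n}$ onto itself, which yields normality and the containment $K^{PV_n}\subseteq\bigcap_f f(K^{diff})=K_{min}$ in one stroke, and minimality then falls out of Lemma 2.14(2). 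What your approach buys is a uniform argument for all $n$ (and for $K^{PV_\infty}$) that also supplies a detail the paper merely asserts, namely why $K^{PV_{n+1}}$ is mapped \emph{onto} itself: the $C_K$-dimension count between the solution spaces of $\partial Y=B'Y$ and $\partial Y=BY$, which is exactly where algebraic closedness of $C_K$ (hence $C_{K^{diff}}=C_K$) is used. The cost is that minimality is no longer self-contained but leans on Poizat's results as packaged in Lemma 2.14(2), whereas the paper's fundamental-system argument is elementary. Your treatment of (ii) matches the paper's intent, and your explicit reduction of ``$PV$-closed'' for an arbitrary intermediate $F$ to ``contains all solutions in $K^{diff}$ of every linear equation over $F$'' (valid since $C_F=C_K$ is algebraically closed, so solution spaces computed in any differential closure of $F$ coincide with those in $K^{diff}$) is a clarification the paper leaves implicit.
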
 

\vspace{2mm}
\noindent
 Magid  \cite{MagidII} denotes $K^{PV_{\infty}}$ by $K_{\infty}$ and calls it the complete $PV$-closure of $K$  (although of course ``$PV$-closure" is correct). 

In any case Proposition 2.13 applies to each of $K^{PV_{n}}$ and $K^{PV_{\infty}}$ to give a full Galois correspondence between closed subgroups of the automorphism group over $K$ and intermediate
differential subfields.

\vspace{5mm}
\noindent
Finally let us discuss the structure of differential subfields $L$  in between $K$ and $K^{PV_{\infty}}$ which are normal over $K$ in $K^{diff}$.  The main point is that any such $L$ is an ``iterated" Picard-Vessiot extension of $K$. 

\begin{Proposition} Let $L$ be a differential field such that $K\leq L \leq K^{PV_{\infty}}$ which is normal (over $K$ in $K^{diff}$).  Let $L_{n} = L\cap K^{PV_{n}}$  (and take $L_{0} = K$). 
Then
\newline
(i)  Let $N_{L} \ =\{n<\omega: L_{n+1}\setminus L_{n}\neq\emptyset\}$.  Then $N_{L}$ is an initial segment of $\omega$ (so all of $\omega$ or a finite proper initial segment). 
\newline
(ii)  Each $L_{n+1}$ is a Picard-Vessiot extension of $L_{n}$ (maybe trivial if $L_{n+1} = L_{n}$) and is normal and minimal over $K$ in $K^{diff}$
\newline
(iii) $L$ is also minimal (over $K$ in $K^{diff}$). 
\end{Proposition}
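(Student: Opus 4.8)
The plan is to establish (ii) and (iii) first and then deduce (i) from (ii). Throughout I write $K^{\mathrm{diff}} = M_K$ and recall from Lemma 3.1(i) that each $K^{PV_n}$ is normal and minimal over $K$ and that $K^{PV_\infty} \subseteq K_{\min}$. Part (iii) is then immediate: $L \subseteq K^{PV_\infty} \subseteq K_{\min}$, so $L$ — a subset of $K_{\min}$ containing $K$ — is minimal by Lemma 2.14(2). The same remark gives the minimality clause of (ii), since $L_{n+1} = L\cap K^{PV_{n+1}} \subseteq K^{PV_{n+1}} \subseteq K_{\min}$; and normality of $L_{n+1}$ over $K$ is clear, since any $\sigma \in \mathrm{Aut}(K^{\mathrm{diff}}/K)$ preserves both $L$ (by hypothesis) and $K^{PV_{n+1}}$ (by Lemma 3.1(i)), hence their intersection. (That $L_{n+1}$ is definably closed is automatic: it is a differential field and $DCF_0$ eliminates imaginaries.)

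The substantial part of (ii) is that $L_{n+1}$ is a $PV$ extension of $L_n$. Set $M = L_{n+1}\cdot K^{PV_n}$, the compositum formed inside $K^{PV_{n+1}}$; thus $K^{PV_n} \subseteq M \subseteq K^{PV_{n+1}}$, $L_{n+1}\cap K^{PV_n} = L\cap K^{PV_n} = L_n$, and $L_{n+1}\cdot K^{PV_n} = M$. First I would show $M$ is invariant under $\mathcal G := \mathrm{Aut}(K^{PV_{n+1}}/K^{PV_n})$: lifting $\sigma\in\mathcal G$ to $\tilde\sigma\in\mathrm{Aut}(K^{\mathrm{diff}}/K^{PV_n})\subseteq\mathrm{Aut}(K^{\mathrm{diff}}/K)$ we get $\tilde\sigma(L)=L$, $\tilde\sigma(K^{PV_{n+1}})=K^{PV_{n+1}}$ and $\tilde\sigma$ fixing $K^{PV_n}$ pointwise, whence $\tilde\sigma(L_{n+1})=L_{n+1}$ and $\sigma(M)=\tilde\sigma(M)=M$. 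By the Galois correspondence for the $PV$ extension $K^{PV_{n+1}}/K^{PV_n}$ (standard differential Galois theory, e.g. \cite{MagidI}, \cite{Wibmer-regular-singular}: the $\mathcal G$-invariant intermediate differential fields are exactly the $PV$ sub-extensions) it follows that $M/K^{PV_n}$ is a $PV$ extension — in particular $M$ is internal to the field of constants $\mathcal C$ over $K^{PV_n}$, and $\mathrm{Aut}(M/K^{PV_n})$ is a proalgebraic group over $C_K$.

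To descend to $L_{n+1}/L_n$, take a finite tuple $b$ from $L_{n+1}$; since $b\in M$, $\mathrm{tp}(b/K^{PV_n})$ is $\mathcal C$-internal, and provided $b$ is independent from $K^{PV_n}$ over $L_n$ in the sense of forking, transitivity of forking applied to an internality witness shows $\mathrm{tp}(b/L_n)$ is $\mathcal C$-internal too; as $b$ was arbitrary and $C_{L_n}=C_K$ is algebraically closed, this makes $L_{n+1}$ a $PV$ extension of $L_n$. So everything reduces to the independence of $L_{n+1}$ and $K^{PV_n}$ over $L_n = L_{n+1}\cap K^{PV_n}$, and this I expect to be the main obstacle: it is the instance, for these two $K$-normal subsets of $K_{\min}$, of the general principle that two subsets of the prime model $M_K$ normal over $K$ are forking-independent over their intersection. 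I would derive it from the machinery of Section 2 together with Poizat's analysis of $K_{\min}$: $K$-normal definably closed subsets of $K_{\min}$ correspond to $\mathrm{Aut}(K_{\min}/K)$-normal closed subgroups, the closed subgroups attached to $L_{n+1}$ and to $K^{PV_n}$ generate one whose fixed set is exactly $L_{n+1}\cap K^{PV_n}=L_n$, and — using that $M_K$ is $\omega$-homogeneous over $L_n$ (Lemma 2.11 applied with $B=K_{\min}$, since $L_n\subseteq K_{\min}$) and that types over subsets of $K_{\min}$ behave stationarily — this Galois-theoretic "freeness" is upgraded to forking independence. (Equivalently, one records that the restriction map $\mathrm{Aut}(M/K^{PV_n})\to\mathrm{Aut}(L_{n+1}/L_n)$ is injective with kernel $\mathrm{Aut}(M/K^{PV_n}L_{n+1})=\mathrm{Aut}(M/M)=1$, and a Goursat/natural-irrationalities argument for the normal subgroups $\mathrm{Aut}(K^{\mathrm{diff}}/L_{n+1})$ and $\mathrm{Aut}(K^{\mathrm{diff}}/K^{PV_n})$ of $\mathrm{Aut}(K^{\mathrm{diff}}/K)$ makes it onto, so $\mathrm{Aut}(L_{n+1}/L_n)\cong\mathrm{Aut}(M/K^{PV_n})$ is proalgebraic over $C_K$ and acts on $L_{n+1}$ as a restriction of the proalgebraic action on $M$.)

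Finally, (i) follows from (ii). Suppose $p\notin N_L$, i.e. $L_{p+1}=L_p$; I claim $L_m=L_p$ for every $m\ge p$, which gives $m\notin N_L$ for all $m\ge p$ and hence that $N_L$ is downward closed, i.e. an initial segment of $\omega$. The claim is by induction on $m\ge p$, the case $m=p$ being trivial. If $L_m=L_p$, then by (ii) $L_{m+1}$ is a $PV$ extension of $L_m=L_p$, so it is generated over $L_p$ by solutions in $K^{\mathrm{diff}}$ of a family of linear differential equations over $L_p$; since $L_p=L\cap K^{PV_p}\subseteq K^{PV_p}$, these are linear differential equations over $K^{PV_p}$, so their solution spaces lie in $K^{PV_{p+1}}=(K^{PV_p})^{PV}$, whence $L_{m+1}\subseteq K^{PV_{p+1}}$. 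Combined with $L_{m+1}\subseteq L$ this gives $L_{m+1}\subseteq L\cap K^{PV_{p+1}}=L_{p+1}=L_p$, while $L_{m+1}\supseteq L_m=L_p$; so $L_{m+1}=L_p$, completing the induction and the proof.
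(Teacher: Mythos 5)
Your handling of (iii), of the normality and minimality clauses of (ii) (via Lemma 2.14 and the fact that an intersection of $K$-normal sets is $K$-normal), and your deduction of (i) from (ii) are all correct; the last is a clean argument that the paper leaves to the reader. The problem is the core of (ii), that $L_{n+1}$ is a $PV$ extension of $L_{n}$, where there are two genuine gaps. First, your descent hinges on the forking-independence of $L_{n+1}$ from $K^{PV_{n}}$ over $L_{n}$, which you yourself flag as ``the main obstacle'' and for which you offer only a plan (``this Galois-theoretic freeness is upgraded to forking independence'') rather than an argument; nothing in Section 2 or in Poizat's results performs that upgrade. Second, and more seriously, even granting the independence, the inference from ``$tp(b/L_{n})$ is internal to the constants and $C_{L_{n}}=C_{K}$ is algebraically closed'' to ``$L_{n+1}$ is a $PV$ extension of $L_{n}$'' is not valid: types internal to the constants can have non-linear binding groups (for instance abelian varieties over the constants), yielding strongly normal extensions with no new constants that are not Picard--Vessiot. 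To conclude $PV$ you must actually exhibit linear differential equations over $L_{n}$ whose solution sets generate $L_{n+1}$, and internality alone does not produce them.

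The paper supplies exactly this missing step, and does so without the compositum $M$ or any independence statement. Given $b\in L_{n+1}\setminus L_{n}$, a solution of a linear equation over $K^{PV_{n}}$, one replaces $b$ by the vector ${\bar b}$ of its derivatives and takes the formula $\phi({\bar x},c)$ isolating $tp({\bar b}/K^{PV_{n}})$ with $c$ a canonical parameter; normality of $K^{PV_{n}}$ forces $c\in dcl({\bar b})$, hence $c\in L\cap K^{PV_{n}}=L_{n}$. Normality of $L$ puts all realizations of $\phi({\bar x},c)$ inside $L_{n+1}$ and inside the solution space $V$ of the linear equation, and Zilber indecomposability shows that the $C_{K}$-span $W$ of this set of realizations is a definable subspace of $V$ defined over $L_{n}$ --- that is, the solution space of a linear equation over $L_{n}$ --- contained in $L_{n+1}$ and containing the original $b$. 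Ranging over $b$ exhibits $L_{n+1}$ as a $PV$ extension of $L_{n}$. Some such production of $L_{n}$-defined solution spaces is what your argument needs to be complete; even if you keep the route through $M=L_{n+1}\cdot K^{PV_{n}}$ (whose invariance argument is fine, and for which the second fundamental theorem of $PV$ theory is a legitimate citation), you still have to descend the defining equations from $K^{PV_{n}}$ to $L_{n}$, which is the same difficulty in different clothing.
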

\begin{proof} For simplicity we will consider just the case of $L_{1}$ and $L_{2}$. The reader can easily generalize the proof.  Note that $L_{1}$ is a $PV$ extension (maybe trivial) of $L_{0} = K$.  Suppose that $L_{2}\setminus L_{1}$ is nonempty.  So, by our earlier discussion, there is $b\in L_{2}\setminus L_{1}$ which is a solution of an equation $y^{(n)} + a_{n-1}y^{(n-1)} + ... + a_{1}y' + a_{0}y$ with the $a_{i}\in L_{1}$ and $n > 1$.  We immediately replace $b$ by
${\bar b} =  (b,\partial(b),..,\partial^{(n-1)}(b))^{t}$ (which note is in $L_{2}$) which is a solution of an appropriate $\partial Y = AY$ over $K_{1}$.   Consider $tp({\bar b}/K^{PV})$ which is isolated by a formula $\phi({\bar x}, c)$ where 
$c$ is a tuple from $K^{PV}$ which we may assume to be a ``canonical parameter" of the formula $\phi({\bar x}, c)$. 
Now as $K^{PV}$ is normal in $K^{diff}$, $c$ is fixed by any automorphism of $K^{diff}$ which fixes ${\bar b}$.  By homogeneity $c\in dcl({\bar b})$ in particular $c\in L$ so $c\in L_{1}$.  Note that by normality of $L$ all realizations of 
$\phi({\bar x}, c)$ are in $L_{2}$, and are moreover contained in  $V$, the  set of solutions of $\partial Y = AY$ in $K^{diff}$  (contained in $L_{2}$).  $V$ is an $n$-dimensional vector space over $C_{K}$, and the set $X$ of realizations 
of $\phi({\bar x}, c)$ is an infinite set of realizations of a complete type over $K^{PV}$).  Then, using the Zilber indecomposability theorem for example, the $C_{K}$-vector subspace $W$ of $V$ generated by $X\cup\{0\}$ is definable, over $C_{K},c$, so over $L_{1}$.   Now $W$ is contained in $L_{2}$, and contains the original chosen element $b$. Also on very general grounds the differential field generated over $L_{1}$ by $W$ is a (finite type) $PV$ extension of $L_{1}$.
We have shown that $L_{2} $ is a $PV$ extension of $L_{1}$. 
\end{proof}

So by the Proposition above we can use Proposition 2.13 to strengthen Theorem  5 of \cite{MagidII} by both  replacing ``normal locally iterated $PV$ extension" by just ``normal" and including a definition of ``closed subgroup". 

\begin{Corollary} Let $L$ be a differential subfield of $K^{PV_{\infty}}$ containing $K$ and suppose $L$ is normal (over $K$ in $K^{diff}$). Then there is a full Galois correspondence between intermediate differential fields (between $K$ and $L$) and closed subgroups of $Aut(L/K)$. 
\end{Corollary}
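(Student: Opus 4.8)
The plan is to read the Corollary off directly from Theorem \ref{thm: correspondence}, applied with $A = K$, $B = L$, and $M_A = K^{diff}$. First I would note that the ambient hypotheses of Section 2 are in force: $DCF_0$ is a complete totally transcendental (in fact $\omega$-stable) theory with $DCF_0 = DCF_0^{eq}$, and by definition $K^{diff}$ is the prime model $M_K$ over the small definably closed set $K$. So it only remains to verify that $L$ is definably closed, normal, and minimal over $K$ in $K^{diff}$, and then to translate the two sides of the abstract correspondence into differential-algebraic language.

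For the hypotheses: since $\mathrm{char}\,0$ and $DCF_0$ has elimination of imaginaries, for any differential subfield $F$ of $\mathcal{U}$ one has $dcl(F) = F$ — an element of $acl(F) = F^{\mathrm{alg}}$ lying in $dcl(F)$ is the unique realization of its type over $F$, which in characteristic $0$ forces it into $F$, and no element outside $acl(F)$ can enter $dcl(F)$. In particular $K$ and $L$ are definably closed in $\mathcal{U} = \mathcal{U}^{eq}$. Normality of $L$ over $K$ in $K^{diff}$ is the hypothesis of the Corollary, and minimality of $L$ over $K$ in $K^{diff}$ is exactly part (iii) of the preceding Proposition (with part (ii) handling the intermediate $L_n$). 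Hence all hypotheses of Theorem \ref{thm: correspondence} are met, and we obtain mutually inverse maps $C \mapsto H_C$, $H \mapsto C_H$ giving a Galois correspondence between definably closed subsets $C$ with $K \subseteq C \subseteq L$ and closed subgroups $H$ of $Aut(L/K)$ in the sense of Definition 2.4.

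Finally I would carry out the dictionary. The definably closed subsets of $L$ containing $K$ are precisely the intermediate differential subfields $K \leq F \leq L$: one direction is the computation $dcl(F) = F$ above, and conversely any definably closed subset of $\mathcal{U}$ is closed under $+,\times$, additive and multiplicative inverses, and $\partial$ (all $\emptyset$-definable operations), hence is a differential subfield. Likewise, since $L$ is normal over $K$, $K^{diff}$ is prime over $L$ as well (Remark 2.10), so by quantifier elimination for $DCF_0$ the group $Aut(L/K)$ of $K$-elementary permutations of $L$ coincides with the group of differential-field automorphisms of $L$ fixing $K$ pointwise, i.e.\ with the classical differential Galois group. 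Combining these identifications with the correspondence of the previous paragraph yields the Corollary. There is essentially no obstacle here: all the substance sits in Theorem \ref{thm: correspondence} and the preceding Proposition, and the only thing that needs to be spelled out is the dictionary just described — together with the observation that Definition 2.4's ``closed subgroup'' is the canonical notion that fills the gap left open in \cite{MagidII}.
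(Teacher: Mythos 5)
Your proposal is correct and follows the paper's own route exactly: the paper derives the Corollary by combining Proposition 3.2 (normality plus minimality of $L$ and the $L_n$) with Theorem \ref{thm: correspondence}, which is precisely your plan. The dictionary you spell out (definably closed subsets $=$ intermediate differential subfields, $K$-elementary permutations $=$ differential automorphisms over $K$) is left implicit in the paper but is accurate and harmless to include.
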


\section{$Aut(K_{2}/K)$}
Our aim to  prove Proposition \ref{prop: proalg} in the introduction.  So $K$ is again a differential field with algebraically closed field $C_{K}$ of constants. 
Using the notation now of Section 3, we take $L$ to be a normal differential subfield of $K^{PV_{\infty}}$ such that $L = L_{2}$ and $L_{1}\neq L_{2}$.  Equivalently, $L$ is a normal differential subfield of  $K^{PV_{2}}$ which is not contained in $K^{PV}$, equivalently, $L$ is not a $PV$-extension of $K$.  By Proposition 3.2, $L_{1}$ is a (nontrivial) $PV$ extension of $K$ and $L_{2}$ is a $PV$ extension of $L_{1}$. (Moreover there is no differential subfield $F$ of $L_{2}$ properly containing 
$L_{1}$ which is a $PV$ extension of $K$.)

A special case is, of course, taking $L_{1} = K^{PV}$ and $L_{2} = K^{PV_{2}}$ which is the case appearing in the abstract. 

We will now change notation to be consistent with the introduction. So we will write $K_{1}$ for $L_{1}$ and $K_{2}$ for $L_{2}$ and reserve the symbol $L$ for other things.

We let $G = Aut(K_{2}/K)$, $N = Aut(K_{2}/K_{1})$ and $H = Aut(K_{1}/K)$.  So we have the exact sequence
$$ 1 \to N \xrightarrow{i} G \xrightarrow{\pi} H \to 1$$
of abstract groups for now, where $i$ is the natural embedding.

As discussed in \cite{Pillay-aut}, $H$ has the structure of a pro-affine algebraic group over $C_{K}$, and likewise for $N$. 
Let us be more precise, focusing on $N = Aut(K_{2}/K_{1})$.
For each finite-type Picard-Vessiot extension $L$ of $K_{1}$ contained in $K_{2}$, picking a fundamental system of solutions ${\bar b}$ in $L$ for a suitable linear $DE$ over $K_{1}$, yields a group isomorphism $g_{L}$ of $Aut(L/K_{1})$ with a linear algebraic subgroup $G_{L}$ of $GL_{n}(C_{K})$, where $g_{L}(\sigma)$ is the  unique $n\times n$ nonsingular matrix $c_{\sigma}$ over $C_{K}$ such that $\sigma({\bar b}) = {\bar b}\cdot c_{\sigma}$. So in fact $G_{L} =\{{\bar b}^{-1}\sigma({\bar b}):\sigma\in Aut(L/K_{1})\}$.  Here multiplication is in the sense of $GL_{n}(K^{diff})$. 
Given $L\leq L'$ where $L'$ is a larger finite-type $PV$ extension of $K_{1}$ inside $K_{2}$ we have a surjective homomorphism from $Aut(L'/K_{1})$ to $Aut(L/K_{1})$ inducing via $g_{L}$ and $g_{L'}$ a surjective homomorphism  $f_{L',L}$ of algebraic groups from $G_{L'}$ to $G_{L}$.  Then by construction $\{(G_{L},f_{L,L'})\}_{L}$ is an inverse system of linear algebraic groups over $C_{K}$, and we obtain an isomorphism $h$ between $N$ and $\varprojlim G_{L}$ a pro-affine algebraic group over $C_{K}$ (or more accurately its group of $C_{K}$-points). 

It is important to discuss the ``intrinsic" manifestation of the Galois group as it will play a role in our  computations.   Consider as above a finite type $PV$ extension $L$ of $K_{1}$ inside $K_{2}$ and ${\bar b}$ a fundamental system of 
solutions in $L$ for a suitable linear $DE$ over $K_{1}$.  For $\sigma\in Aut(L/K_{1})$ let $h_{L}(\sigma) = \sigma({\bar b}){\bar b}^{-1}\in GL_{n}(K^{diff})$. Then the definition of $h_{L}(\sigma)$ does not depend on the choice ${\bar b}$ of the fundamental system of solutions of the given linear $DE$ and $h_{L}$ sets up an isomorphism between $Aut(L/K_{1})$ and a $K_{1}$-definable (differential algebraic) subgroup $H_{L}$ of $GL_{n}(K^{diff})$. 
 If $X$ is the set of realizations of $tp({\bar  b}/K_{1})$ (in $K^{diff})$, then $H_{L}$ acts (by multiplication in $GL_{n}$) on the left on $X$, isomorphic to the action of $Aut(L/K_{1})$ on $X$  (and moreover the action is strictly transitive). 
$H_{L}$ and $G_{L}$ are definably isomorphic over the parameter ${\bar b}$ (and their own differential fields of definition).  Again if $L'\geq L$ is another finite type $PV$ extension of $K_{1}$ contained in $L_{2}$, we obtain the 
differential algebraic group $H_{L'}$ and the natural surjective homomorphism $Aut(L'/K_{1}) \to Aut(L/K_{1})$ gives a surjective homomorphism from $H_{L'}$ to $H_{L}$ defined over $K_{1}$.  We obtain again an inverse limit 
$\varprojlim _{L}H_{L}$ which is a pro-differential algebraic group, defined over $K_{1}$, and definably isomorphic to  $\varprojlim G_{L}$.  In the statement of Proposition 1.4) for example we are identifying $N$  with $\varprojlim G_{L}$ above. But it will be useful to compute with the pro-differential algebraic group $\varprojlim H_{L}$ too, and we will do this in our proofs below. 

\vspace{2mm}
\noindent

%In any case for now we will identify  $N = Aut(L_{2}/L_{1})$ with the proalgebraic group $\varprojlim G_{L}$  computed inside $C_{K}$. 

Let us make a few comments on so-called $*$-definabillity in model theory.  Such a $*$-definable set refers to the set of solutions of a partial type in maybe infinitely many variables $(x_{i})_{i\in I}$. To ``see" all of the points one usually will work in a saturated model $M$ such that $|I| < |M|$ and the partial type is over a ``small" set of parameters.  By a $*$-definable group we mean a $*$-definable set together with a $*$-definable group operation. A special case of a $*$-definable group is an inverse limit of definable groups with the definable connecting homomorphisms being surjective (and with a small directed system with respect to the saturation of the ambient model). In the totally transcendental context every $*$-definable group is of this form.   However for the example of $\varprojlim G_{L}$, we are computing the group in the model $K^{diff}$ which is not saturated and, in any case, the inverse system could be very large. We happen to see ``all" of the points in $K^{diff}$ because of the isomorphism between the inverse limit and the automorphism group $Aut(K_{2}/K_{1})$.

Let us also remark that the group $G = Aut(K_{2}/K)$ above could not in any reasonable sense have the structure of an inverse limit of definable groups. In that case it would be a proalgebraic group over $C_{K}$ which is a extension of an pro-affine algebraic group by a pro-affine algebraic group, so would itself be pro-affine.  But then $K_{2}$ would itself be a $PV$ extension of $K$, which it is not. 
In any case we would like to recover some additional (pro-)definable or proalgebraic structure on $G$. This is Proposition 1.4.

%Let us just remark that  $G$ can not have any reasonable structure of a $*$-definable and/or proalgebraic group.  If it did, then as an extension of a pro linear by pro-linear it would also be pro-linear in which
 %case on general grounds $K_{2}$ would be a Picard-Vessiot extension of $K$, which by the first paragraph of this section it is not. 

%It is natural to ask neverthless what, if anything, is $*$-definable in $G$, other than $N$. We  work towards the proof of Proposition 1.3 in the introduction. 
Let us fix $D$ a left/right coset of $N$ in $G$, So $D = \pi^{-1}({\sigma_{0}})$ for some $\sigma_{0}\in H$.  Namely $D = \{\tau\in G = Aut(K_{2}/K): \tau|K_{1} = \sigma_{0}\}$.  We will take $\sigma_{0}\neq id$. 
Let us fix a $PV$ extension $L$ of $K_{1}$ contained in $K_{2}$. Then for $\tau\in D$, $\tau(L)$ depends only on $\sigma_{0}$ because $L$ is generated by a solution to a linear ODE defined over $K_1$.  So let $L' = \tau(L)$ for some/any $\tau\in D$.
Let $D_{L} = \{\tau|L: \tau\in D\}$.  Note that as $D$ is a left coset $\tau N$ of $N$, we have a right action of $N$ on $D$ (multiplication in $G$), and likewise we have a left action of  $N$ on $D$. 

\begin{Remark} (i) Restriction to $L$ gives a right (strictly transitive) action of $Aut(L/K_{1})$ on $D_{L}$.  
\newline
(ii) Restriction to $L'$ gives a left (strictly transitive) action of $Aut(L'/K_{1})$ on $D_{L}$. 
\newline 
(iii) So we  have a bi-torsor  $(Aut(L'/K_{1}), D_{L}, Aut(L/K_{1}))$. 
\end{Remark} 
\begin{proof} Immediate. The bi-torsorial nature (left and right actions commute) is because everything is induced by multiplication in $G$ (which is associative). 

\end{proof} 

\begin{Proposition} The bitorsor in (iii) above is $K_{1}$-definable. More specifically there is a bijection $h: D_{L} \to C_{L}$ for some $K_{1}$-definable $C_{L}$,  and $K_{1}$-definable left action of $H_{L'}$ on $C_{L}$,  and $K_{1}$-definable right action of $H_{L}$ on $C_{L}$,  such that $(h_{L'}, h_{L}, h)$ give an isomorphism of bitorsors  between \newline $(Aut(L'/K_{1}), D_{L}, Aut(L/K_{1}))$ and $(H_{L'}, D_{L},  H_{L})$. 
\end{Proposition}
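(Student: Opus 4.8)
The plan is to coordinatize the bitorsor $D_{L}$ by a set of matrices over $K^{diff}$, using the fundamental system of solutions that generates $L$ over $K_{1}$. First I would fix a fundamental system of solutions $\bar b$ (an element of $GL_{n}(K^{diff})$) in $L$ of a linear differential equation $\partial Y = AY$ over $K_{1}$ with $L$ generated over $K_{1}$ by $\bar b$, fix $\tau_{0}\in D$, and set $\bar b' = \tau_{0}(\bar b)$; since $\tau_{0}$ extends $\sigma_{0}\in Aut(K_{1}/K)$, $\bar b'$ is a fundamental system of solutions of $\partial Y = \sigma_{0}(A)Y$, an equation again over $K_{1}$, and it generates $L' = \tau_{0}(L)$ over $K_{1}$ (recall $L'$ does not depend on the choice of $\tau_{0}\in D$). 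Let $X$ and $X'$ be the sets of realizations in $K^{diff}$ of $tp(\bar b/K_{1})$ and $tp(\bar b'/K_{1})$; these types are isolated because $K^{diff}$ is prime, hence atomic, over $K_{1}$ (Remark 2.10, using that $K_{1}$ is normal over $K$), so $X$ and $X'$ are $K_{1}$-definable. As in the discussion preceding the Proposition, $H_{L} = \{\bar c_{1}\bar c_{2}^{-1} : \bar c_{1},\bar c_{2}\in X\}$ acts strictly transitively on $X$ by left multiplication, intertwined with the $Aut(L/K_{1})$-action via $h_{L}$, and likewise for $H_{L'}$ and $X'$; in particular $H_{L}$ and $H_{L'}$ are $K_{1}$-definable. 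A short computation with the isomorphism $\rho\mapsto\tau_{0}\rho\tau_{0}^{-1}$ from $Aut(L/K_{1})$ to $Aut(L'/K_{1})$ shows it is the identity when read through $g_{L}$ and $g_{L'}$, so $G_{L} = G_{L'}$ as algebraic subgroups of $GL_{n}(C_{K})$; hence $X = \bar b\cdot G_{L}(C_{K})$ and $X' = \bar b'\cdot G_{L}(C_{K})$, using $C_{K_{1}} = C_{K}$.

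Next I would set $C_{L} = \{\bar c'\bar c^{-1} : \bar c'\in X',\ \bar c\in X\}\subseteq GL_{n}(K^{diff})$, which equals $\bar b'\,G_{L}(C_{K})\,\bar b^{-1}$ and is a $K_{1}$-definable set (the image of the $K_{1}$-definable set $X'\times X$ under a $K_{1}$-definable map), and I would take the left action of $H_{L'}$ and the right action of $H_{L}$ on $C_{L}$ to be ordinary matrix multiplication. The facts to check here are that these products lie again in $C_{L}$ and that the right (resp.\ left) action is independent of the presentation $\bar c'\bar c^{-1}$; both follow from the descriptions $X = \bar b\,G_{L}(C_{K})$, $X' = \bar b'\,G_{L}(C_{K})$ together with $G_{L} = G_{L'}$ (any two elements of $X$, or of $X'$, differ by right multiplication by a common element of $G_{L}(C_{K})$). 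Associativity of matrix multiplication makes the two actions commute, and strict transitivity is clear from the description $C_{L} = \bar b'\,G_{L}(C_{K})\,\bar b^{-1}$, so $(H_{L'}, C_{L}, H_{L})$ is a $K_{1}$-definable bitorsor.

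Then I would define $h\colon D_{L}\to C_{L}$ by $h(\phi) = \phi(\bar b)\bar b^{-1}$, noting that $\phi(\bar b)\in X'$ for $\phi = \tau|L\in D_{L}$ since $\tau(\bar b) = (\tau\tau_{0}^{-1})(\bar b')$ with $\tau\tau_{0}^{-1}\in N$ fixing $K_{1}$. Injectivity is immediate, as $\bar b$ generates $L$ over $K_{1}$ and every element of $D_{L}$ restricts to $\sigma_{0}$ on $K_{1}$. For surjectivity I would first observe that $\phi\mapsto\phi(\bar b)$ already maps $D_{L}$ onto $X'$: every realization $\bar c'$ of $tp(\bar b'/K_{1})$ in $K^{diff}$ lies in $L'$ by normality of $L'$ over $K_{1}$ and generates $L'$ over $K_{1}$ (being a fundamental system of solutions), hence is $\mu(\bar b')$ for some $\mu\in Aut(L'/K_{1})$; since $K_{2}$ is normal over $K_{1}$ (it is normal over $K$), $\mu$ lifts to an element of $N = Aut(K_{2}/K_{1})$, so $\bar c' = \tau(\bar b)$ for a suitable $\tau\in D$; and then $h$ is onto because $C_{L} = X'\cdot\bar b^{-1}$. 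Finally, using that $\phi$ fixes $C_{K}$ pointwise, one computes for $\rho'\in Aut(L'/K_{1})$ and $\rho\in Aut(L/K_{1})$ that
$$h(\rho'\circ\phi) = \rho'(\phi(\bar b))\,\bar b^{-1} = h_{L'}(\rho')\,\phi(\bar b)\,\bar b^{-1} = h_{L'}(\rho')\cdot h(\phi),$$
$$h(\phi\circ\rho) = \phi(\rho(\bar b))\,\bar b^{-1} = \phi(\bar b)\,\bar b^{-1}\cdot\bar b\,g_{L}(\rho)\,\bar b^{-1} = h(\phi)\cdot h_{L}(\rho),$$
where the middle equalities use that $\rho'$ acts on $X'$ by left multiplication by $h_{L'}(\rho')$ and that $\rho(\bar b) = \bar b\,g_{L}(\rho)$, $h_{L}(\rho) = \bar b\,g_{L}(\rho)\,\bar b^{-1}$. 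Thus $(h_{L'}, h, h_{L})$ is the desired isomorphism of bitorsors. The one point that is not pure bookkeeping is the well-definedness and $C_{L}$-valuedness of the two matrix-multiplication actions, and that is exactly where the equality $G_{L} = G_{L'}$ and the torsor descriptions of $X$ and $X'$ get used.
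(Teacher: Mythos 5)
Your proof is correct and follows essentially the same route as the paper's: coordinatize $D_{L}$ by $h(\tau)=\tau(\bar b)\bar b^{-1}$, check independence of the choice of fundamental matrix, and verify the two equivariance identities $h(\rho'\phi)=h_{L'}(\rho')h(\phi)$ and $h(\phi\rho)=h(\phi)h_{L}(\rho)$. Your explicit description $C_{L}=X'\cdot X^{-1}$ (making the $K_{1}$-definability of $C_{L}$ visible) and the surjectivity argument via lifting automorphisms are welcome details that the paper leaves implicit via its Remark 4.3(v),(vi).
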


%We will prove
%\begin{Proposition} $D$ is (naturally) $*$-definable as are the left/right actions of the  $N$ on $D$, namely $N\times D \to D$ and $D\times N \to D$
%\end{Proposition}

\vspace{2mm}
\noindent
We will make use of the analysis from Sections 3 and 4 of \cite{Kamensky-Pillay} which we now  summarize to fix notation (before beginning the proof of Proposition 4.2).  

Let us suppose that our definitions of $G_{L}$ and $H_{L}$ earlier in this section depended upon choosing a linear $DE$ $\partial Y = AY$ and a fundamental matrix of solutions $b$ such that $L = K(b)$.  So $b$ is a single solution of the logarithmic 
differential equation $\partial Z = AZ$ where $Z$ is a matrix of indeterminates ranging over $GL_{n}$.  Now let ${\mathcal Y}_{A}$ denote the solution set of $\partial Z = AZ$ in $K^{diff}$ (so ${\mathcal Y}_{A}\subseteq GL_{n}(L)$).  
As defined earlier, $h_{L}$ denotes the map from $Aut(L/K_{1})$ to $GL_{n}(K^{diff})$ taking $\sigma$ to $\sigma(b)b^{-1}$ (multiplication and inversion in the sense of $GL_{n}$).  Note that $\sigma\in Aut(L/K_{1})$ is determined by its action on  ${\mathcal Y}_{A}$, 
As in Lemma 3.3 of \cite{Kamensky-Pillay} $\rho$ does not depend on the choice of $b\in {\mathcal Y}_{A}$,  and moreover establishes the isomorphism $h_{L}$ between $Aut(L/K_{1})$ acting on ${\mathcal Y}_{A}$ and a differential algebraic 
subgroup $H_{A}^{+}$ of $GL_{n}(K^{diff})$ acting on ${\mathcal Y}_{A}$ by left multiplication in $GL_{n}(K^{diff})$.   $H_{A}^{+}$ is what we called $H_{L}$ earlier but we use the new notation as it is consistent with 
\cite{Kamensky-Pillay} and also emphazises the dependence  on $A$.  $H_{A}^{+}$ is the ``intrinsic" differential Galois group of $L$ over $K_{1}$. 

Now ${\mathcal Y}_{A}$ is a union of $H_{A}^{+}$ orbits, equivalently (in $GL_{n}(K^{diff})$)  a union of right cosets of $H_{A}^{+}$. For some $m$ there is an $A$-definable subset $O_{A}$ of $(C_{K})^{m}$ and an $A$-definable surjective function $f_{A}:{\mathcal Y}_{A}\to O_{A}$  inducing a bijection between ${\mathcal  Y}_{A}/H_{A}^{+}$ and $O_{A}$.  By construction the complete types over $K_{1}$ of fundamental matrices of solutions for $\partial Y = AY$   correspond to the fibres $f_{A}^{-1}(a')$ for $a'\in O_{A}$. Let $a = f_{A}(b)$ where $b$ was our above choice of fundamental matrix of solutions.   The algebraic subgroup $G_{L}$ of $GL_{n}(C_{K})$ constructed earlier depends only on $A$ and  $a$ and is precisely $\{b_{2}^{-1}\cdot b_{1}: b_{1}, b_{2}\in ({\mathcal Y}_{A})_{a}$\}.  In \cite{Kamensky-Pillay} it is denoted $(H_{A})_{a}$. 

%As at the beginninbg of this section we let $h_{L}$ be the isomorphism between $Aut(L/K_{1})$ and $(H_{A})_{a}$ defined by $h_{L}(\tau) = {\bar b}^{-1}\cdot\tau({\bar b})$. Although the algebraic subgroup $(H_{A})_{a}$ of $GL_{n}(C_{K})$ depends only on $A$ and $a$, the isomorphism $h_{L}$ depends also on ${\bar b}$. 

Let $\sigma_{0}(A) = B$. So $L'$ is the $PV$ extension of $K_{1}$ for the linear $DE$  $\partial Y = BY$.   We can define  ${\mathcal  Y}_{B}$, $(H_{B})^{+} = H_{L'}$,  $O_{B}$ and $f_{B}: {\mathcal Y}_{B} \to O_{B}$ as before. 

Just for the record:
\begin{Remark} For any $\tau\in D_{L}$ we have:
\newline 
(i)  $\tau({\mathcal Y}_{A}) = {\mathcal Y}_{B}$.
\newline
(ii) $\tau(f_{A}) = f_{B}$.
\newline
(iii) $O_{A} = O_{B}$ and is fixed pointwise by $\tau$.
\newline
(iv) $f_{B}(\tau(b)) = a = f_{A}(b)$.
\newline
(v)  $\tau$ is determined by $\tau(b)$.
\newline
(vi)  $({\mathcal Y}_{B})_{a}$ is precisely the set of $\tau'(b)$ as $\tau'$ ranges over $D_{L}$
\end{Remark}
{\em Proof.}  (i) to (iv) are clear. For (v), every element $b_{1}$ of ${\mathcal Y}_{A}$ is of the form  $b\cdot c$ for some $c\in GL_{n}(C_{K})$ so $\tau(b_{1}) = \tau(b\cdot c) = \tau(b)\cdot c$. 
\newline
(vi):   Notice that $tp(b/K_{1})$ is isolated by  ``$y\in ({\mathcal Y}_{A})_{a}$", hence $\sigma_{0}(tp(b/K_{1}))$ is isolated by ``$y\in ({\mathcal Y}_{B})_{a}$" so $\tau(b)$
in  $({\mathcal Y}_{B})_{a}$ and for any $d\in ({\mathcal Y}_{B})_{a}$ there is $\tau'\in D_{L}$ such that $\tau'(b) = d$. 
\qed

\vspace{5mm}
\noindent
{\em Proof of Proposition 4.2.}  Recall our fixed $b\in {\mathcal Y}_{B}$.  Let $C_{L} = \{\tau(b)\cdot b^{-1}:\tau\in D_{L}\}$. By  Remark 4.3 (v) the map $h$ taking  $\tau\in D_{L}$ to $\tau(b)\cdot b^{-1}$ is a bijection between $D_{L}$ and $C_{L}$. 
\newline
{\em Claim 1.}  $C_{L}$ and $h: D_{L} \to C_{L}$ do not depend on the choice of $b\in {\mathcal Y}_{A}$.
\newline
{\em Proof of claim.}  Let $b_{1}\in {\mathcal Y}_{A}$. So $b_{1} = b\cdot c$ for some $c\in GL_{n}(C_{K})$. So for any $\tau\in D_{L}$,
\newline
 $\tau(b_{1})\cdot b_{1}^{-1} = \tau(b\cdot c)\cdot (b\cdot c)^{-1} = \tau(b)\cdot c\cdot c^{-1}\cdot b^{-1}$.  \qed

Recall again that the isomorphism $h_{L}: Aut(L/K_{1}) \to H_{L} = (H_{A})^{+}$ taking $\sigma$ to $\sigma(b)\cdot b^{-1}$ does notdepend on the choice of $b\in {\mathcal Y}_{A}$. 

\noindent
{\em Claim 2.}  Let $\sigma\in Aut(L/K_{1})$ and  $\tau\in D_{L}$, then $h(\tau\sigma) = h(\tau)\cdot h_{L}(\sigma)$. (where $\tau\sigma$ is multiplication in $G$ and $\cdot$ is multiplication in 
$GL_{n}(K^{diff})$.
\newline
{\em Proof of claim.}  So  fix $\tau$ and $\sigma$. By Claim 1, $h(\tau) = \tau(\sigma(b))\cdot \sigma(b)^{-1}$ as $\sigma(b) \in {\mathcal Y}_{A}$.
\newline
Hence $h(\tau)\cdot h_{L}(\sigma) = \tau(\sigma(b))\cdot\sigma(b)^{-1}\cdot \sigma(b)\cdot b^{-1} = \tau\sigma(b)\cdot b^{-1} = h(\tau\sigma)$. 
\qed

\vspace{2mm}
\noindent
Claim 2 shows the definability (over $K_{1}$) of the right action of $Aut(L/K_{1})$ on $D_{L}$.
A similar proof gives definability of the right action  of $Aut(L'/K_{1})$ on $D_{L}$.   This completes the proof of Proposition 4.2.   \qed.

\begin{Corollary} Fix $\tau\in G$. Then  conjugation by $\tau$ induces a definable isomorphism between the definable groups $H_{A}^{+}$ and $H_{B}^{+}$. 

\end{Corollary}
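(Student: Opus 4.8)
The plan is to leverage the bitorsor isomorphism established in Proposition 4.2 together with the explicit formula for $h$ from its proof. Fix $\tau \in G$ and set $\sigma_0 = \tau|_{K_1} \in H$, so that $\tau$ lies in the coset $D = \pi^{-1}(\sigma_0)$ studied above. The key observation is the standard fact that conjugation by $\tau$ carries $N = Aut(K_2/K_1)$ to itself (since $N$ is normal in $G$ as $\pi$ is a homomorphism), and on the level of the finite-type pieces it carries $Aut(L/K_1)$ to $Aut(L'/K_1)$, where $L' = \tau(L)$ — this is because $\tau \sigma \tau^{-1}$ fixes $K_1$ pointwise (as both $\tau$ and $\tau^{-1}$ are built from $\sigma_0^{\pm 1}$ on $K_1$ and $\sigma$ fixes $K_1$) and maps $L' = \tau(L)$ to itself. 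So conjugation by $\tau$ gives a group isomorphism $c_\tau : Aut(L/K_1) \to Aut(L'/K_1)$, and the claim is that under the intrinsic identifications $h_L : Aut(L/K_1) \to H_A^+$ and $h_{L'} : Aut(L'/K_1) \to H_B^+$ this becomes a \emph{definable} isomorphism over $K_1$ (or over $\tau(b)$, $b$).

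The main computation is to write down $h_{L'}(c_\tau(\sigma))$ in terms of $h_L(\sigma)$. First I would note that $\tau|_L \in D_L$, so $h(\tau|_L) = \tau(b)\cdot b^{-1} \in C_L$. For $\sigma \in Aut(L/K_1)$, recall $h_L(\sigma) = \sigma(b)\cdot b^{-1}$. Since $c_\tau(\sigma) = \tau \sigma \tau^{-1} \in Aut(L'/K_1)$ and $\tau(b) \in \mathcal{Y}_B$, the intrinsic formula gives $h_{L'}(c_\tau(\sigma)) = (\tau\sigma\tau^{-1})(\tau(b)) \cdot (\tau(b))^{-1} = \tau\sigma(b)\cdot \tau(b)^{-1} = \tau(\sigma(b)\cdot b^{-1}) = \tau(h_L(\sigma))$, using that $\tau$ is a differential field automorphism and hence commutes with the $GL_n$ operations. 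Thus $h_{L'}\circ c_\tau \circ h_L^{-1}$ is the restriction to $H_A^+$ of the map $M \mapsto \tau(M)$ on $GL_n(K^{diff})$. The point is then that this map, restricted to $H_A^+$, is definable over $K_1$: indeed, by Claim 1 / Remark 4.3(vi) one can instead write $\tau(M)$ for $M = \sigma(b) b^{-1}$ in terms of the $K_1$-definable right-action data on $C_L$ and $D_L$, since $\tau(M) = \tau(h_L(\sigma)) = h(\tau\sigma)\cdot h(\tau)^{-1} = h(\tau|_L \cdot \sigma)\cdot (\tau(b)b^{-1})^{-1}$, and both the action of $Aut(L/K_1)$ on $D_L$ and the bijection $h: D_L \to C_L$ are $K_1$-definable by Proposition 4.2. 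Hence $c_\tau$ corresponds to a $K_1$-definable (in fact $K_1(\tau(b),b)$-definable, but one can argue the canonical parameters land in $K_1$ as in the proof of Proposition 3.2) isomorphism $H_A^+ \to H_B^+$.

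The main obstacle I anticipate is bookkeeping about fields of definition: the map $M \mapsto \tau(M)$ a priori involves the parameter $\tau(b)b^{-1} \in C_L \subseteq L'$, not obviously something over $K_1$, so some care is needed to see that the \emph{group isomorphism} $H_A^+ \to H_B^+$ (as opposed to an arbitrary definable bijection) is genuinely definable over $K_1$. This is handled exactly as the right-action definability in Proposition 4.2: the graph of $c_\tau$, viewed inside $H_A^+ \times H_B^+$, is $\{(h_L(\sigma), h_{L'}(c_\tau(\sigma))) : \sigma\}$, and expressing the second coordinate via $h(\tau|_L\cdot\sigma)$ and $h(\tau|_L)$ shows it is cut out by the $K_1$-definable bitorsor structure on $(H_{B^+}, C_L, H_A^+)$ — the coset $D$ being $\pi^{-1}(\sigma_0)$ is determined by $\sigma_0$, hence over $K_1$. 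Once this is in place, passing to the inverse limits over all finite-type $L$ upgrades the statement to the proalgebraic assertion of Proposition \ref{prop: proalg}(i). I would present the corollary's proof as essentially the displayed identity $h_{L'}\circ c_\tau = \tau \circ h_L$ together with a one-line appeal to Proposition 4.2 for definability over $K_1$.
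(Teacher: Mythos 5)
Your proof is correct and takes essentially the same route as the paper: both arguments reduce the corollary to Proposition 4.2 via the identity $\tau\sigma=\sigma'\tau$, your version just making the formula explicit as $h_{L'}(\tau\sigma\tau^{-1})=\tau\bigl(h_{L}(\sigma)\bigr)=h(\tau)\cdot h_{L}(\sigma)\cdot h(\tau)^{-1}$, i.e.\ matrix conjugation by $h(\tau)=\tau(b)b^{-1}\in C_{L}$, which is a nice concrete supplement. The one caveat is your claim that the isomorphism is definable \emph{over $K_{1}$}: the corollary only asserts definability (with parameters, here $h(\tau)$), and your parenthetical argument that the canonical parameter lands in $K_{1}$ is not justified and is doubtful in general since the conjugation genuinely depends on $\tau$ and not only on $K_{1}$-data; fortunately nothing in the corollary or its later use requires it.
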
 
\begin{proof} For $\sigma\in Aut(L/K_{1})$, $\tau\sigma\tau^{-1}$ is the unique $\sigma'\in Aut(L'/K_{1}$ such that  $\tau\sigma = \sigma'\tau$. 
Identifying via $h_{L}$, $D_{L}$ with $C_{L}$, and $Aut(L/K_{1})$, $Aut(L'/K_{1})$ with $H_{A}^{+}$, $H_{B}^{+}$ respectively, Proposition 4.2 yields that  maps $H_{A}^{+}$ to $C_{L}$ taking taking $\sigma$ to $\tau\sigma$, and from $H_{B}^{+}$ to $C_{L}$  taking $\sigma'$ to $\sigma'\tau$ are definable. This suffices. 
\end{proof}

\begin{Corollary}  (i) For $\tau\in G$, conjugation by $\tau$ is a prodefinable  automorphism of the prodefinable group $\varprojlim _{L}H_{L}$,
\newline
(ii) For $\tau\in G$ conjugation by $\tau$ is a proalgebraic automorphism of the proalgebraic group  $\varprojlim G_{L}$
\end{Corollary}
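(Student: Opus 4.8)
\emph{Proof strategy.} The plan is to show that conjugation by $\tau$ respects the inverse-system structure exhibiting $N = Aut(K_{2}/K_{1})$ as $\varprojlim_{L}H_{L}$ (respectively $\varprojlim_{L}G_{L}$), the only substantive input being Corollary 4.5. First I would note that conjugation by $\tau$ makes sense as an automorphism of $N$: since $K_{1}$ is a $PV$ extension of $K$ it is normal over $K$ in $K^{diff}$, so $N\trianglelefteq G$, and $c_{\tau}:\sigma\mapsto \tau\sigma\tau^{-1}$ is a group automorphism of $N$ with inverse $c_{\tau^{-1}}$.

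Next I would analyse the effect of $c_{\tau}$ on the indexing data. Let $\mathcal{I}$ be the directed set of finite-type $PV$ subextensions $L$ of $K_{2}/K_{1}$, and write $\sigma_{0} = \pi(\tau) = \tau|_{K_{1}}\in H$. Using that $\tau$ fixes $K$ (hence $C_{K}$) pointwise and permutes $K_{2}$ while $\sigma_{0}$ permutes $K_{1}$, one checks that $L\mapsto L^{\tau}:=\tau(L)$ is a poset automorphism of $\mathcal{I}$ (if $L$ is the $PV$ extension of $K_{1}$ for $\partial Y = AY$, then $L^{\tau}$ is the one for $\partial Y = \sigma_{0}(A)Y$, that is, $L^{\tau} = L'$ with $B = \sigma_{0}(A)$ in the notation of Corollary 4.5). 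Moreover $c_{\tau}$ carries $Aut(K_{2}/L) = \ker\big(N\to Aut(L/K_{1})\big)$ onto $Aut(K_{2}/L^{\tau})$, so it descends to a family of group isomorphisms $Aut(L/K_{1})\to Aut(L^{\tau}/K_{1})$ compatible with the restriction maps; equivalently, $c_{\tau}$ is an isomorphism of the inverse system $\big(Aut(L/K_{1})\big)_{L\in\mathcal{I}}$ onto itself, covering the index automorphism $L\mapsto L^{\tau}$. Hence to prove (i) and (ii) it suffices to identify the component maps and check that they are definable, respectively algebraic.

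For (i): under the identification $Aut(L/K_{1})\cong H_{L} = H_{A}^{+}$, the component map $H_{A}^{+}\to H_{L^{\tau}} = H_{B}^{+}$ is exactly the definable isomorphism produced in Corollary 4.5, so combined with the compatibility above this realizes $c_{\tau}$ as a prodefinable automorphism of $\varprojlim_{L}H_{L}$ (inverse $c_{\tau^{-1}}$). For (ii): I would transfer along the definable isomorphisms $H_{L}\cong G_{L}$ and $H_{L^{\tau}}\cong G_{L^{\tau}}$ (given over the chosen fundamental matrices of solutions, as recalled in Section 4) to get a $DCF_{0}$-definable group isomorphism $G_{L}\to G_{L^{\tau}}$ between algebraic subgroups of $GL_{n}(C_{K})$. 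Since the field of constants $C_{K}$ of $DCF_{0}$ is a pure, stably embedded algebraically closed field, the graph of this map is a constructible subgroup of $G_{L}\times G_{L^{\tau}}\subseteq GL_{n}(C_{K})^{2}$, hence Zariski closed and definable over $C_{K}$, so the map is a morphism of algebraic groups over $C_{K}$. Running this over all $L\in\mathcal{I}$, and using that the connecting homomorphisms $G_{L}\to G_{L_{0}}$ are already algebraic, shows that $c_{\tau}$ is a proalgebraic automorphism of $\varprojlim_{L}G_{L}$.

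The genuinely new content is entirely in Corollary 4.5, which is already established; the rest is bookkeeping about inverse systems. The step requiring the most care is the last transfer in (ii) --- passing from a map that is a priori only $DCF_{0}$-definable to one that is algebraic over $C_{K}$ --- where purity of the field of constants is essential; in particular, although the definable isomorphism $G_{L}\to G_{L^{\tau}}$ may a priori involve parameters from $K_{2}$ (namely $b$ and $\tau(b)$), stable embeddedness of $C_{K}$ lets one replace these by parameters from $C_{K}$.
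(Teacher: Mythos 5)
Your proof is correct and follows essentially the same route as the paper's: apply the preceding corollary (the definable isomorphism $H_{A}^{+}\to H_{B}^{+}$ induced by conjugation --- which the paper numbers 4.4, not 4.5) componentwise, using that $L\mapsto\tau(L)$ permutes the directed index set so that $\varprojlim_{L}H_{L}=\varprojlim_{L}H_{\tau(L)}$, and then upgrade ``prodefinable'' to ``proalgebraic'' via stable embeddedness of $C_{K}$ in $K^{diff}$. The paper's own proof is a two-line version of exactly this bookkeeping.
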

\begin{proof} (i) Fix $\tau$. Then $\varprojlim _{L}H_{L}  =  \varprojlim _{L}H_{\tau(L)}$ So we can apply Corollary 4.4.
\newline
(ii)  Under the isomorphism between $\varprojlim _{L}H_{L}$ and  $\varprojlim G_{L}$, conjugation by $\tau\in Aut(K_{2}/K)$ gives a prodefinable automorphism of 
 $\varprojlim G_{L}$.  By the stable embeddedness of $C_{K}$ in $K^{diff}$, it will be a proalgebraic automorphism.

\end{proof}

\begin{Remark}  (i)  In fact in  Proposition 4.2, and Corollaries 4.3 and 4.4 we do not make  any use of the assumption that $K_{1}$ is a $PV$-extension. We use nothing about the Picard-Vessiot extension $K_{1}$ of $K$.   
So the content is as follows: Suppose $C_{K}$ is algebraically closed. Let $K\leq K_{1}\leq K_{2}\leq K^{diff}$ be such that $K_{1}$ and $K_{2}$ are normal over $K$ in $K^{diff}$ and  $K_{2}$ is a Picard-Vessiot extension of $K_{1}$.  The conjugation by any given element of $G = Aut(K_{2}/K)$ is a proalgebraic automorphism of $N = Aut(K_{2}/K_{2})$ where the latter is considered as the group of $C_{K}$-points of a proalgebraic group over $C_{K}$. 
\newline
(ii) However the next Corollary does make use of the Picard-Vessiot extension $K_{1}$ of $K$.
\end{Remark} 

\begin{Corollary}  Suppose $C_{K}$ is algebraically closed, $K_{1}$ is a $PV$ extension of $K$ and $K_{2}$ a $PV$ extension of $K_{1}$ and $K_{2}$ is normal over $K$ in $K^{diff}$. Suppose moreover that $G = Aut(K_{2}/K)$ is a semidirect product  $N \rtimes H$ of $N = Aut(K_{2}/K_{2})$ and $H = Aut(K_{1}/K)$. Then for any $g\in N$, left translation by $g$ is a proalgebraic morphism in  $N\times H$ viewed as a proalgebraic variety. 
\end{Corollary}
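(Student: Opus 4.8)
The plan is to convert left translation by $g\in N$ into the obvious product map on $N\times H$. Fix, once and for all, a group-theoretic splitting $s\colon H\to G$ of the exact sequence $1\to N\xrightarrow{i} G\xrightarrow{\pi} H\to 1$ witnessing that $G=N\rtimes H$. Then $\Phi\colon N\times H\to G$, $(n,h)\mapsto n\cdot s(h)$, is a bijection, and the proalgebraic variety structure on $G$ appearing in the statement is by definition the one transported via $\Phi$ from the product of the proalgebraic varieties $N=\varprojlim_L G_L$ and $H$ (both over $C_K$). So it is enough to show that, for $g\in N$, the map $\Phi^{-1}\circ L_g\circ\Phi\colon N\times H\to N\times H$ is a proalgebraic morphism, where $L_g(x)=g\cdot x$ denotes left translation in the group $G$.

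First I would carry out the (one-line) computation of this conjugated map. For $g\in N$ and $(n,h)\in N\times H$ one has $g\cdot\bigl(n\cdot s(h)\bigr)=(g\cdot n)\cdot s(h)$ with $g\cdot n\in N$, so $\Phi^{-1}\bigl(L_g(\Phi(n,h))\bigr)=(g\cdot n,\,h)$. Thus left translation by $g\in N$ acts on $N\times H$ as left multiplication by $g$ in the first coordinate and as the identity in the second coordinate; crucially, no term involving the conjugation action of $H$ on $N$ occurs (this is exactly why the case $g\in N$ is the simple one).

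Next I would invoke that left multiplication by a fixed element $g$ of the proalgebraic group $N=\varprojlim_L G_L$ is a morphism — indeed an isomorphism — of the underlying proalgebraic variety of $N$: it is the inverse limit, over the directed system of finite-type $PV$ extensions $L$ of $K_1$ inside $K_2$, of the left translations by the images of $g$ in the linear algebraic groups $G_L$, each of which is a morphism of affine varieties, and these are compatible with the connecting homomorphisms $f_{L',L}$. Since the identity map on $H$ is trivially a proalgebraic morphism, the map $(n,h)\mapsto(g\cdot n,\,h)$ is a product of proalgebraic morphisms, hence a proalgebraic morphism $N\times H\to N\times H$. This establishes the corollary.

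The case $g\in N$ presents no genuine obstacle; its content is merely that the semidirect product splitting turns $L_g$ into a map that does not see the $H$-action, so the main thing to be careful about is the book-keeping of the identification $\Phi$ and the semidirect product multiplication law. For completeness let me indicate how this, together with the fact established above that conjugation by a fixed element of $G$ is a proalgebraic automorphism of $N$, yields the full statement of Proposition \ref{prop: proalg}(ii): for an arbitrary $g=n_0\cdot s(h_0)\in G$ the same computation gives
\[
\Phi^{-1}\circ L_g\circ\Phi\colon (n,h)\longmapsto \bigl(n_0\cdot(s(h_0)\,n\,s(h_0)^{-1}),\ h_0 h\bigr),
\]
where in the first coordinate we have the composite of conjugation by the fixed element $s(h_0)$ (a proalgebraic automorphism of $N$) with left translation by the fixed element $n_0\in N$, and in the second coordinate left translation by $h_0$ in the proalgebraic group $H$; so $L_g$ is again a proalgebraic morphism of $N\times H$ to itself. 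Everything proalgebraic thus reduces to morphisms in the linear algebraic quotients $G_L$ together with the conjugation result.
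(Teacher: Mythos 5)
Your proof is correct and follows essentially the same route as the paper's: write $G$ as $N\times H$ via the splitting, compute $L_g$ in coordinates, and reduce to multiplication in $N$ and $H$ together with the fact (Corollary 4.5) that conjugation by a fixed element is a proalgebraic automorphism of $N$. Note that the paper's own proof actually fixes an arbitrary $g=(n_1,h_1)\in G$ (matching Proposition \ref{prop: proalg}(ii)), so your closing paragraph treating general $g=n_0\cdot s(h_0)$ is precisely the paper's argument, and the restriction to $g\in N$ in the statement of the corollary appears to be a typo.
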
 
\begin{proof} So as both $N$ and $H$ have the structure of proalgebraic varieties (over $C_{K}$), so does the Cartesian product $N\times H$.  
Fix $g\in G$ which we write as $(n_{1}, h_{1})\in N\times H$.  Then left multiplication by $g$ in the group $G$ takes $(n_{2}, h_{2})$ to $(n_{1}n_{2}^{h_{1}}, h_{1}h_{2})$. 
As by Corollary 4.5,  conjugation by the fixed element $h_{1}\in G$ is a proalgebraic automorphism of $N$ and both multiplication in $N$ and in $H$ are proalgebraic, it follows that left multiplication by $g$ is a proalgebraic morphism from $N\times H$ to itself.

\end{proof}

\end{document}